\newtheorem{theorem}{Theorem}[section]
\newtheorem{question}[theorem]{Question}
\newtheorem{conjecture}{Conjecture}
\newtheorem{lemma}[theorem]{Lemma} %%Delete [thm] to re-start numbering
\newtheorem{proposition}[theorem]{Proposition} %%Delete [thm] to re-start
\newtheorem{corollary}[theorem]{Corollary} %%Delete [thm] to re-start
\newtheorem{thmletter}{Theorem}
\newtheorem{corolletter}[thmletter]{Corollary} 
\newtheorem{example}[theorem]{Example}
\newcommand{\p}[1]{\noindent {\newline\bf #1.}}
\newcommand{\aut}{\operatorname{Aut}}
\newcommand{\emo}{\operatorname{End}}
\newcommand{\eq}{\operatorname{Eq}}
\newcommand{\im}{\operatorname{im}}
\newcommand{\rk}{\operatorname{rk}}
\newcommand{\fix}{\operatorname{Fix}}
\newcommand{\SD}{\operatorname{SD}}
\title{The equaliser conjecture for the free group of rank two}
\author{Alan D. Logan}
\address{
Heriot-Watt University,   Edinburgh EH14 4AS,
 Scotland}
 \email{a.logan@hw.ac.uk}
\subjclass[2010]{20E05, 20E07}
\keywords{Free group, equaliser, fixed subgroup.}
\begin{document}
\maketitle

\begin{abstract}
The equaliser of a set of homomorphisms $S: F(a, b)\rightarrow F(\Delta)$ has rank at most two if $S$ contains an injective map, and is not finitely generated otherwise. This proves a strong form of Stallings' Equaliser Conjecture for the free group of rank two.

Results are also obtained for pairs of homomorphisms $g, h:F(\Sigma)\rightarrow F(\Delta)$ when the images are inert in, or retracts of, $F(\Delta)$.
\end{abstract}

\section{Introduction}
The \emph{equaliser} of two free group homomorphisms $g, h:F(\Sigma)\rightarrow F(\Delta)$ is the set of points where they agree, so
$\eq(g, h):=\{x\in F(\Sigma)\mid g(x)=h(x)\}$.
More generally, the equaliser of a set $S:F(\Sigma)\rightarrow F(\Delta)$ of homomorphisms is $\eq(S):=\cap_{g, h\in S}\eq(g, h)$.
If $g$ or $h$ is injective then $\eq(g, h)$ has finite rank, $\rk(\eq(g, h))<\infty$, \cite{Goldstein1986Fixed} and the following conjecture is usually attributed to Stallings\footnote{Stallings' original 1984 version of Conjecture \ref{Qn:StallingsRank} had both maps injective \cite[Problems P1 \& 5]{Stallings1987Graphical}; at this time it was known that $\eq(g, h)$ is finitely generated under this stronger condition \cite{Goldstein1984Automorphisms, Goldstein1985Monomorphisms}, but the case of precisely one injective map was still open.}
\cite[Problem F31]{Baumslag2002Open} \cite[Problem 6]{Dicks1996Group} \cite[Conjecture 8.3]{Ventura2002Fixed}.
\begin{conjecture}[The Equaliser Conjecture]
\label{Qn:StallingsRank}
If $g, h: F(\Sigma)\rightarrow F(\Delta)$ are homomorphisms with $h$ injective then $\rk(\eq(g, h))\leq|\Sigma|$.
\end{conjecture}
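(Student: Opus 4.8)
Since Conjecture~\ref{Qn:StallingsRank} is open in general, I would aim—as the title promises—at the case $F(\Sigma)=F(a,b)$, and the first moves are formal. The rank $\rk(\eq(g,h))$ is unchanged under pre-composing $g,h$ by an automorphism of $F(\Sigma)$ or post-composing both by an automorphism of $F(\Delta)$, and one may replace $F(\Delta)$ by the free (rank $\leq 4$) subgroup $\langle\im g\cup\im h\rangle$; so the pair $(g,h)$ may be put into a convenient normal form. Since $h$ is injective, set $H:=h(F(\Sigma))$ and $\psi:=g\circ h^{-1}\colon H\to F(\Delta)$; then $h$ restricts to an isomorphism from $\eq(g,h)$ onto $\fix(\psi):=\{y\in H\mid \psi(y)=y\}$, so it suffices to bound $\rk(\fix(\psi))$. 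The key elementary observation is that $\fix(\psi)\leq H\cap\psi(H)=h(F(\Sigma))\cap g(F(\Sigma))$: a fixed point lies in $H$ by definition and in $\psi(H)=g(F(\Sigma))$ because it equals its own $\psi$-image.

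Write $I:=h(F(\Sigma))\cap g(F(\Sigma))$. By the Hanna--Neumann Inequality in the strengthened form proved by Friedman and by Mineyev, $\overline{\rk}(I)\leq\overline{\rk}(h(F(\Sigma)))\cdot\overline{\rk}(g(F(\Sigma)))$, where $\overline{\rk}(K):=\max\{\rk(K)-1,0\}$; as $h$ is injective this gives $\rk(I)\leq(|\Sigma|-1)^2+1$, which equals $|\Sigma|$ exactly when $|\Sigma|\leq 2$. This, I believe, is the structural reason a bound of the conjectured strength is out of reach beyond rank two. For $F(\Sigma)=F(a,b)$ we now have $\fix(\psi)\leq I$ with $\rk(I)\leq 2$, so the statement reduces to showing that this finitely generated subgroup of $F_2$ (finite generation is \cite{Goldstein1986Fixed}) has rank at most $2$. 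That is genuinely something to prove, since a finitely generated subgroup of $F_2$ can have arbitrarily large rank; the goal is to show that $\fix(\psi)$ sits inside $I$ about as tightly as a free factor would, i.e.\ $\rk(\fix(\psi))\leq\rk(I)$, ideally by proving $\fix(\psi)$ is a free factor of, or at least inert in, $I$.

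For that last point I would put the normalised pair $(g,h)$ into a Whitehead-type normal form and run a peak-reduction argument organised through the associated Post Correspondence Problem for $(g,h)$: assuming $\rk(\eq(g,h))\geq 3$, the core graph of $\eq(g,h)$ carries more independent cycles than the rank-$2$ group $I$ can support, so there is at least one "extra" independent coincidence $g(w)=h(w)$; tracing the relations $g(w_i)=h(w_i)$ through the cancellation (van Kampen) diagrams for the words $g(w_i)$ and $h(w_i)$ should either exhibit a length-reducing transformation of $(g,h)$, feeding an induction, or force an overlap pattern incompatible with the injectivity of $h$. The step I expect to be the main obstacle is precisely this core case: understanding how two independent fixed relations can coexist inside $I\cong F_2$, and in particular excluding the delicate configuration in which $\eq(g,h)$ is an infinite-index rank-$2$ subgroup of $I$ lying transversally to $g(F(a,b))$—a situation in which there is no second application of Howson-type finiteness to fall back on, and one must argue combinatorially with the explicit structure of $F_2$. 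The companion assertion of the abstract, that $\eq(S)$ fails to be finitely generated when $S$ contains no injective map, would be handled separately by exhibiting an explicit infinite ascending chain of common fixed elements.
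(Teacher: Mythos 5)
There is a genuine gap, and it sits exactly where you flag it: everything up to ``$\fix(\psi)\leq I:=\im(g)\cap\im(h)$ with $\rk(I)\leq 2$'' is correct but is only the first step, and the proposed peak-reduction/PCP argument for the core claim $\rk(\fix(\psi))\leq\rk(I)$ is not carried out and carries all of the difficulty. A finitely generated subgroup of a rank-two group can have arbitrary rank, so the containment in $I$ by itself proves nothing, and you give no mechanism that would actually exclude the ``delicate configuration'' you describe. The missing idea in the paper is to \emph{iterate} your containment rather than try to finish combinatorially: setting $H_0=F(\Sigma)$ and $H_{i+1}=g^{-1}(g(H_i)\cap h(H_i))$ (your $I$ is essentially $g(H_1)$), one gets a nested chain in which every term has rank at most $2$ --- not by the multiplicative Friedman--Mineyev bound, which degrades under iteration, but because two-generated subgroups of free groups are \emph{inert} (Tardos), so each intersection with a rank-$\leq 2$ subgroup again has rank $\leq 2$. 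Takahasi's theorem then bounds the rank of the intersection $\SD(g,h)=\cap_i H_i$ by $2$. On this ``stable domain'' one has $g(\SD(g,h))\leq h(\SD(g,h))$, so $h^{-1}\circ g$ is an honest \emph{endomorphism} of $\SD(g,h)$ whose fixed subgroup is exactly $\eq(g,h)$, and the Bestvina--Handel/Imrich--Turner bound $\rk(\fix(\phi))\leq\rk(F)$ finishes the proof. In other words, the hard step you propose to attack by hand is precisely the already-solved fixed-subgroup problem, once the domain has been shrunk far enough that $h^{-1}g$ makes sense as a self-map; without that reduction your $\psi$ is only a partial map $H\to F(\Delta)$ with image not contained in $H$, so no fixed-subgroup machinery applies to it.

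Two smaller points. First, your heuristic that the quadratic Hanna--Neumann bound is ``the structural reason'' the method stops at rank two is essentially right, but the precise property needed is inertness of the images in $\langle\im(g)\cup\im(h)\rangle$ (the paper's Theorem~\ref{thm:Inert}), which for rank two is supplied by Tardos and in general is exactly what is unknown. Second, the non-finitely-generated case for sets without injections is handled in the paper not by an ascending chain of common fixed elements but by observing that $\eq(S)$ is then a nontrivial normal subgroup of infinite index (it contains $[a,b]$ and cannot have finite index since roots are unique in free groups), hence is not finitely generated.
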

This conjecture has its roots in ``fixed subgroups'' $\fix(\phi)$ of free group endomorphisms $\phi: F(\Sigma)\rightarrow F(\Sigma)$ (set $\Sigma=\Delta$, then $\fix(\phi):=\eq(\phi, \operatorname{id})$).
Fixed subgroups have generated a lot of literature from the 1970s onwards
\cite{Bogopolski2016algorithm,
Dyer1975Periodic,
Feighn2018algorithmic,
Gersten1987Fixed,
Jaco1977Surface,
Ventura2002Fixed}.
Indeed, the Equaliser Conjecture has been answered for fixed subgroups: Bestvina and Handel used Thurston's train-track maps to prove that $\rk(\fix(\phi))\leq|\Sigma|$ for $\phi\in\aut(F(\Sigma))$ \cite{Bestvina1992Traintracks}, and Imrich and Turner extended this bound to all endomorphisms \cite{Imrich1989Endomorphisms}. Bergman further extended this bound to all sets of endomorphisms \cite{bergman1999supports}.

Equalisers seem to be harder to understand than fixed subgroups, with only a few papers addressing them
\cite{CMV,
Goldstein1984Automorphisms,
Goldstein1985Monomorphisms,
Goldstein1986Fixed,
Myasnikov2014Post}.
On the other hand, equalisers of free monoid homomorphisms have been studied in computer science for over 70 years, starting with the Post's proof that their triviality is undecidable \cite{Post1946variant} (this is Post's Correspondence Problem).
Many other problems can be easily reduced to this classical problem, such as the mortality problem \cite{Neary2015undecidability} and problems in formal language theory \cite{Harju1997Morphisms}.
That such a fundamental problem is undecidable may be an underlying reason for the relative difficulty in understanding equalisers of free group homomorphisms compared to fixed subgroups, and indeed Post's Correspondence Problem for free groups has recently been discussed as an important open question \cite[Problem 5.1.4]{Dagstuhl2019}.

Our main result considers sets of homomorphisms, much like Bergman's result, and answers the Equaliser Conjecture for the free group of rank two. There are no cardinality restrictions on the set $S$; it may be finite or infinite.
\begin{thmletter}
\label{thm:rankSETS}
Let $S:F(a, b)\rightarrow F(\Delta)$ be a set of homomorphisms, $|S|\geq2$.
\begin{enumerate}[label=(\arabic*)]
\item\label{rankSETS:injONLY} If $S$ contains only injective maps then $\rk(\eq(S))\leq2$.
\item\label{rankSETS:injANDnoninj} If $S$ contains both injective and non-injective maps then $\rk(\eq(S))\leq1$.
\item\label{rankSETS:noninjONLY} If $S$ contains no injective maps then $\eq(S)$ is not finitely generated.
\end{enumerate}
\end{thmletter}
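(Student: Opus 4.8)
My plan is to extract nearly everything from one structural fact: a \emph{non}-injective homomorphism $g\colon F(a,b)\to F(\Delta)$ has cyclic image. Indeed $g(F(a,b))$ is a two-generated subgroup of a free group, hence free of rank $\le2$; were it of rank $2$ it would be isomorphic to $F(a,b)$, and the induced surjection $F(a,b)\twoheadrightarrow F(a,b)$ would be injective by the Hopf property of free groups, contradicting $\ker g\neq 1$. So $g(F(a,b))=\langle u_g\rangle$ for some $u_g\in F(\Delta)$, which we may take trivial (if $g$ is trivial) or not a proper power, and $g(x)=u_g^{\beta_g(x)}$ for a homomorphism $\beta_g\colon F(a,b)\to\mathbb Z$ that is nonzero exactly when $g\neq1$. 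This gives \ref{rankSETS:injANDnoninj} immediately: with $h\in S$ injective and $g\in S$ non-injective, for $x,y\in\eq(g,h)$ the elements $h(x)=g(x)$ and $h(y)=g(y)$ are powers of $u_g$, hence commute, so $h([x,y])=1$ and thus $[x,y]=1$; therefore $\eq(g,h)$ is an abelian --- hence cyclic --- subgroup of $F(a,b)$, so $\rk(\eq(S))\le\rk(\eq(g,h))\le1$. It also gives \ref{rankSETS:noninjONLY}: for distinct non-injective $g,h\in S$ a short case analysis (do $u_g$ and $u_h$ lie in a common cyclic subgroup of $F(\Delta)$ or generate a rank-two free one?) shows $\eq(g,h)$ is the kernel of a \emph{nonzero} homomorphism $F(a,b)\to\mathbb Z$ or $F(a,b)\to\mathbb Z^2$. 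Hence $\eq(S)=\bigcap_{g,h\in S}\eq(g,h)$ is normal in $F(a,b)$, contains $[F(a,b),F(a,b)]$, and its quotient --- a quotient of $\mathbb Z^2$ embedding into a product of free abelian groups, hence torsion-free --- is $\cong\mathbb Z^r$ with $1\le r\le2$, the lower bound because $|S|\ge2$ makes $\eq(S)$ proper. A nontrivial normal subgroup of infinite index in a free group is infinitely generated, so $\eq(S)$ is not finitely generated.

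For \ref{rankSETS:injONLY} the heart is the two-map case: if $h\colon F(a,b)\to F(\Delta)$ is injective (and here so is $g$), then $\rk(\eq(g,h))\le2$. Granting this, I would obtain the general case as follows. First, every subgroup of rank $\le2$ of a free group is \emph{inert} (meets each finitely generated subgroup in one of no larger rank): for rank $\le1$ this is clear, and for rank $2$ it is immediate from the Strengthened Hanna Neumann inequality (Friedman--Mineyev). Now fix $h\in S$ injective; for finite $S'\subseteq S$ with $h\in S'$, an induction on $|S'|$ using $\eq(S'\cup\{g\})=\eq(S')\cap\eq(h,g)$, together with the fact that each $\eq(h,g)$ is finitely generated (Goldstein--Turner) of rank $\le2$ (two-map case) hence inert, gives $\rk(\eq(S'))\le2$. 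The remaining step is to pass from finite $S'$ to all of $S$: the groups $\eq(S')$ form a downward-directed family of finitely generated rank-$\le2$ subgroups, and I would argue this family stabilises --- using not merely the rank bound but that its members are equalisers of sets containing $h$ --- so that $\eq(S)=\eq(S')$ for some finite $S'$, whence $\rk(\eq(S))\le2$.

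That leaves the two-map case as the genuine obstacle. I would attack it by the classical identification of $\eq(g,h)$, for $h$ injective, with a fixed subgroup: transporting along $h\colon F(a,b)\xrightarrow{\ \sim\ }\im h$ turns $\eq(g,h)$ into $\fix(\theta)$ for the partial map $\theta=g\circ h^{-1}$, and then one passes to the ``eventual image'' --- repeatedly replace the current domain $N$ (starting from $F(a,b)$) by $(h|_N)^{-1}(L)\cap(g|_N)^{-1}(L)$, where $L=g(N)\cap h(N)$, each of rank $\le2$ by Howson and Hanna Neumann --- until $\theta$ descends to an \emph{automorphism} of a free group of rank $\le2$, whose fixed subgroup has rank $\le2$ by Bestvina--Handel. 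Equivalently and perhaps more robustly, model $g$ and $h$ by graph maps out of the finite Stallings graph of $\im h$ and realise $\eq(g,h)$ as $\pi_1$ of the core of the associated fibre product; the point of the hypothesis is that, $\im h$ and $\im g$ both having rank $\le2$ and hence being inert, the first Betti number of that core graph stays at most $2$. The crux is establishing this --- showing the eventual-image process terminates, or equivalently that the coincidence graph has the claimed Euler characteristic, which is exactly where the Hanna Neumann inequality must be spent; everything else is the Hopf-property observation above or elementary subgroup arithmetic.
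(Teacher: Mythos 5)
Your arguments for parts \ref{rankSETS:injANDnoninj} and \ref{rankSETS:noninjONLY} are correct and complete: the Hopf-property observation that non-injective maps out of $F(a,b)$ have cyclic image is exactly the engine the paper uses, and your identification of $F(a,b)/\eq(S)$ with $\mathbb{Z}^r$, $1\le r\le 2$, is a clean alternative to the paper's argument (which instead rules out finite index via uniqueness of roots). Your reduction of part \ref{rankSETS:injONLY} for \emph{finite} $S$ to the two-map case is also sound, and in fact slightly leaner than the paper's: you only need that rank-$\le 2$ subgroups are inert (Tardos), applied to $\eq(S')\cap\eq(h,g)$, rather than the paper's full machinery showing $\eq(g,h)$ is itself inert.

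The genuine gap is in the two-map case of \ref{rankSETS:injONLY}, which you yourself flag as ``the crux'' and leave as a sketch. Your plan is the paper's plan --- iterate $H_{i+1}=g^{-1}(g(H_i)\cap h(H_i))$, note each $H_i$ has rank $\le 2$ by Hanna Neumann/Tardos, and transport $\eq(g,h)$ to a fixed subgroup over the limit --- but you close the argument by asserting that the process ``terminates'' so that the induced map becomes an automorphism of a rank-$\le 2$ free group. That termination claim is unjustified and cannot be expected in general: descending chains of rank-two subgroups of $F_2$ need not stabilise (e.g.\ $\langle a,b\rangle>\langle a^2,b\rangle>\langle a^4,b\rangle>\cdots$), and the paper never proves stabilisation except in the special case of retracts (Lemma \ref{lem:retractSD}); indeed finite generation of $\SD(g,h)$ is left open as Question \ref{Qn:bases}. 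The missing ingredient is Takahasi's theorem (Proposition \ref{prop:Exercise}): a nested chain of free groups of rank uniformly $\le n$ has intersection of rank $\le n$ (a free factor of cofinitely many terms). With that, $\rk(\SD(g,h))\le 2$ without any stabilisation, and one finishes with Imrich--Turner's bound $\rk(\fix(\phi))\le\rk(F)$ for the induced \emph{endomorphism} of $\SD(g,h)$ (it is a monomorphism, not in general an automorphism, so Bestvina--Handel does not apply directly). The same gap recurs in your passage from finite $S'$ to infinite $S$: you again posit an unproven stabilisation of the directed family $\eq(S')$, whereas a countable cofinal nested chain of rank-$\le 2$ subgroups plus Takahasi (equivalently, Lemma \ref{lem:InertiaIntersect}) gives the bound with no stabilisation needed. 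So the architecture is right, but the two places where you appeal to termination are exactly the places where the proof actually needs Takahasi's bounded-rank intersection theorem.
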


All the possibilities of Theorem \ref{thm:rankSETS} occur; see Example \ref{ex:rankSETS}.

\p{Inert subgroups}
Parts \ref{rankSETS:injANDnoninj} and \ref{rankSETS:noninjONLY} of Theorem \ref{thm:rankSETS} are easily dealt with; the difficulty lies in part \ref{rankSETS:injONLY}. To handle this part we use the following concept:
A subgroup $H$ of a free group $F(\Sigma)$ is \emph{inert} if for all $K\leq F(\Sigma)$ we have $\rk(H\cap K)\leq\rk(K)$.
Inert subgroups were introduced by Dicks and Ventura, who proved that fixed subgroups of monomorphisms are inert \cite[Theorem IV.5.5]{Dicks1996Group}.
Inertness is closed under intersections \cite[Corollary I.4.13]{Dicks1996Group} (see also Lemma \ref{lem:InertiaIntersect}), and so Dicks--Ventura's result extends to fixed subgroups of sets of monomorphisms \cite[Theorem IV.5.7]{Dicks1996Group}.

Examples of inert subgroups of free groups include free factors and retracts (as discussed below),
and every subgroup of rank two \cite{Tardos1992intersection}.
There are inert subgroups which are not fixed subgroups, for example $\langle a, b^2\rangle$ is inert in $F(a, b)$ but not a fixed subgroup as it is not root-closed, while more exotic examples were found by Rosenmann \cite[Example 3.1]{rosenmann2013intersection}.
Recent work on inert subgroups has focused on trying to algorithmically determine inertness by quantifying it \cite{ivanov2018intersection, roy2019degrees} as well as generalising the concept to other groups \cite{wu2014group, Wu2018Fixed, zhang2015fixed}.

Inertness is used to prove our results, and inertness is woven into this topic.
In particular, the following conjecture generalises the Dicks--Ventura result.

\begin{conjecture}
\label{Qn:Inert}
If $S: F(\Sigma)\rightarrow F(\Delta)$ is a set of homomorphisms containing at least one injective map then $\eq(S)$ is inert in $F(\Sigma)$.
\end{conjecture}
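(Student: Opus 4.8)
The plan is to first reduce Conjecture \ref{Qn:Inert} to single pairs of maps, and then to identify --- and, in the rank-two case, supply --- the rank estimate that makes it run. Fix an injective $h\in S$ and use the elementary identity
\[
\eq(S)=\bigcap_{g\in S}\eq(g,h),
\]
which holds because $x$ lies in the right-hand side exactly when $g_1(x)=h(x)=g_2(x)$ for all $g_1,g_2\in S$. Since inertness is closed under intersections (Lemma \ref{lem:InertiaIntersect}), it is enough to show that $\eq(g,h)$ is inert in $F(\Sigma)$ whenever $h$ is injective. Now for any $K\leq F(\Sigma)$ one has
\[
\eq(g,h)\cap K=\{x\in K\mid g(x)=h(x)\}=\eq(g|_K,h|_K),
\]
with $h|_K$ again injective, so ``$\eq(g,h)$ is inert in $F(\Sigma)$'' says precisely that $\rk\bigl(\eq(g|_K,h|_K)\bigr)\leq\rk(K)$ for every $K$. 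Taking $K=F(\Sigma)$ recovers the Equaliser Conjecture for $(g,h)$: thus Conjecture \ref{Qn:Inert} is a hereditary strengthening of Conjecture \ref{Qn:StallingsRank}, and conversely a version of the Equaliser Conjecture uniform over all finitely generated free domains would yield it. I expect essentially all of the difficulty to live here; the inertness reformulation is routine, but the uniform rank control on equalisers it demands is the open problem.

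For the rank-two group $F(\Sigma)=F(a,b)$ the reduction can be bypassed, because the quantitative input is already in hand: Theorem \ref{thm:rankSETS} gives $\rk(\eq(S))\leq 2$ whenever $S$ contains an injective map, and every subgroup of rank at most two of a free group is inert, by Tardos's theorem \cite{Tardos1992intersection}. Hence $\eq(S)$ is inert in $F(a,b)$, which is the rank-two case of Conjecture \ref{Qn:Inert}.

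For a general free group $F(\Sigma)$ the strategy stalls exactly at the point flagged above: the rank bound $\rk(\eq(g,h))\leq|\Sigma|$ is not known --- it is Conjecture \ref{Qn:StallingsRank}, open for $|\Sigma|\geq 3$ --- so the reduction has no input. The cases one can currently reach are those in which $g(F(\Sigma))$ or $h(F(\Sigma))$ is inert in, or a retract of, $F(\Delta)$, where a pullback/Stallings-graph model or an analogue of the fixed-subgroup machinery becomes available; I would attack the general conjecture by trying to push that machinery to arbitrary images, but the honest expectation is that genuinely new control on equalisers of free group homomorphisms will be required, and that is the main obstacle.
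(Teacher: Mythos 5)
The statement is an open conjecture, and your proposal correctly treats it as such: your reduction to pairs via $\eq(S)=\bigcap_{g\in S}\eq(g,h)$ and the observation that inertness of $\eq(g,h)$ is exactly the Equaliser Conjecture applied to all restrictions $g|_K, h|_K$ is precisely the paper's Proposition \ref{prop:EquivalentConj} in \ref{appendix:EquivalentConj}, and your rank-two case (Theorem \ref{thm:rankSETS} plus Tardos) matches the paper's resolution for $F(a,b)$. No gaps; this is essentially the same approach as the paper's partial results.
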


Clearly Conjecture \ref{Qn:Inert} implies the Equaliser Conjecture, because we may view $\eq(g, h)$ as $\eq(g, h)\cap F(\Sigma)$ so by inertness $\rk(\eq(g, h))\leq|\Sigma|$.
On the other hand, we prove in \ref{appendix:EquivalentConj} that the conjectures are in fact equivalent (Ventura implies this is so \cite[Conjecture 8.3]{Ventura2002Fixed}).
It is worthwhile emphasising that if
the Equaliser Conjecture
holds for \emph{all} maps then
Conjecture \ref{Qn:Inert}
holds as well for \emph{all} maps, but that if one proves
the Equaliser Conjecture
for some class $\mathcal{C}$ of maps then this does not prove
Conjecture \ref{Qn:Inert}
for $\mathcal{C}$.

The main thrust of this paper is the proof of the following general result on inert subgroups, where part \ref{Inert:1} addresses Conjecture \ref{Qn:StallingsRank} (the Equaliser Conjecture) and part \ref{Inert:2} addresses Conjecture \ref{Qn:Inert}. The condition of the images $\im(g)$ and $\im(h)$ being inert subgroups of $\langle \im(g)\cup\im(h)\rangle$ allows the codomain $F(\Delta)$ to be altered whilst preserving the result.

\begin{thmletter}
\label{thm:Inert}
Let $g, h: F(\Sigma)\rightarrow F(\Delta)$ be injective homomorphisms.
\begin{enumerate}[label=(\arabic*)]
\item\label{Inert:1} If $\im(g)$ is an inert subgroup of $\langle \im(g)\cup\im(h)\rangle$ then $\rk(\eq(g, h))\leq|\Sigma|$.
\item\label{Inert:2} If both $\im(g)$ and $\im(h)$ are inert subgroups of $\langle \im(g)\cup\im(h)\rangle$ then $\eq(g, h)$ is an inert subgroup of $F(\Sigma)$.
\end{enumerate}
\end{thmletter}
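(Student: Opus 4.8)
The plan is to reduce the statement to a computation of the fixed subgroup of an \emph{automorphism} of a free group, where the Bestvina--Handel rank bound and the Dicks--Ventura inertness theorem apply; the inertness hypotheses on $\im(g)$ (and $\im(h)$) will be used to force the relevant subgroups to be finitely generated of controlled rank.

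First I would replace the codomain $F(\Delta)$ by $M:=\langle\im(g)\cup\im(h)\rangle$, which affects neither $\eq(g,h)$ nor the hypotheses, both being stated relative to $M$. Identifying $F(\Sigma)$ with $A:=\im(g)$ along $g$, the map $h$ becomes an injective homomorphism $\theta:=h\circ g^{-1}\colon A\to M$ with $\theta(A)=\im(h)=:B$, and $\eq(g,h)$ is carried isomorphically onto $\fix(\theta):=\{a\in A:\theta(a)=a\}\le A\cap B$. Next I would encode $\theta$ in the HNN extension $\Gamma:=\langle M,t\mid t^{-1}at=\theta(a)\ (a\in A)\rangle$ and examine its Bass--Serre tree: the stable letter $t$ is hyperbolic, translating by $1$ along an axis $L$ whose consecutive edge stabilisers are the conjugates $t^{n}At^{-n}$, and one checks that $\fix(\theta)=A\cap C_{\Gamma}(t)$ while the pointwise stabiliser $C:=\operatorname{Fix}(L)=\bigcap_{n\in\mathbb Z}t^{n}At^{-n}$ is the largest subgroup of $A\cap B$ carried onto itself by $\theta$, so that $\theta|_{C}\in\aut(C)$. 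Since any $\theta$-fixed element has $\theta^n(a)=a\in A\cap B$ for all $n$ and so lies in $C$, we have $\fix(\theta)=\fix(\theta|_{C})$; hence, once $C$ is known to be finitely generated, Bestvina--Handel gives $\rk\eq(g,h)=\rk\fix(\theta|_{C})\le\rk C$, and Dicks--Ventura gives that $\fix(\theta|_{C})$ is inert in $C$. Part \ref{Inert:1} then follows from $\rk C\le|\Sigma|$, and part \ref{Inert:2} from $C$ being inert in $A$ (using a transitivity property of inertness and the isomorphism $g\colon F(\Sigma)\to A$).

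So the whole argument rests on controlling $C=\bigcap_{n\in\mathbb Z}t^{n}At^{-n}$, which I expect to be the main obstacle, since a priori subgroups of $A$ can have arbitrarily large rank. Here is how I would attack it. One shows that $C$ already equals the one-sided intersection $\bigcap_{m\ge0}t^{-m}At^{m}$ (the two-sided object is automatically $\theta$-invariant), and that this is the nested intersection of the chain $D_{0}:=A\cap B$, $D_{n+1}:=\theta(D_{n}\cap A)$. Each $D_{n}$ is finitely generated (Howson) and contained in $B$, and inertness of $A$ gives $\rk D_{n+1}=\rk(D_{n}\cap A)\le\rk D_{n}$ and $\rk D_{0}=\rk(A\cap B)\le\rk B=|\Sigma|$, so the ranks are non-increasing and bounded by $|\Sigma|$. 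The delicate point is to prove that this chain \emph{stabilises}, so that $C=D_{N}$ for some $N$, whence $C$ is finitely generated of rank $\le|\Sigma|$; for this I would use bounded cancellation (Cooper's lemma) for $\theta$ and $\theta^{-1}$ to bound the size of the core Stallings graphs of the $D_{n}$ uniformly in $n$, forcing the chain to terminate. When $B$ is also inert, one checks in addition that each $D_{n}$ is inert in $M$, so the stable value $C$ is inert in $M$ and hence in $A$, giving part \ref{Inert:2}. Making the stabilisation argument precise — and isolating exactly where each inertness hypothesis enters the rank estimate — is where I expect the real work to concentrate.
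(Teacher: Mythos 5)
Your overall architecture is sound and in fact closely parallels the paper's: your subgroup $C$, transported back to $F(\Sigma)$ along $g$, is essentially the paper's ``stable domain'' $\SD(g,h)=\bigcap H_i$ with $H_0=F(\Sigma)$, $H_{i+1}=g^{-1}(g(H_i)\cap h(H_i))$, and the endgame (restrict to the maximal invariant subgroup, apply Bestvina--Handel/Dicks--Ventura there, then use transitivity and intersection-closure of inertness) is the same in spirit. The genuine gap is your stabilisation claim. The nested chain need not stabilise, and bounded cancellation cannot rescue it: Cooper's lemma gives only a multiplicative per-step bound on core graphs, hence at best exponential growth, not a uniform bound. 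A concrete counterexample already occurs in the degenerate case $g=\operatorname{id}_{F(a)}$, $h\colon a\mapsto a^2$ (both injective, $\im(g)=F(a)$ trivially inert in itself): the relevant chain is $\langle a^2\rangle\gneq\langle a^4\rangle\gneq\langle a^8\rangle\gneq\cdots$, which is strictly decreasing with unbounded core graphs, while $C$ is trivial. Stabilisation does hold when the images are \emph{retracts} (each proper step strictly drops the rank), which is exactly how the paper obtains its algorithmic results in that case, but it fails under mere inertness; indeed, whether $\SD(g,h)$ is even finitely generated for general injective $g,h$ is raised as an open question in the paper, so any argument that proves termination of the chain in general would be proving considerably more than Theorem \ref{thm:Inert}.

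The fix is to drop stabilisation entirely and invoke Takahasi's theorem (Proposition \ref{prop:Exercise}): a nested sequence of free groups each of rank at most $n$ has intersection which is a free factor of all but finitely many terms, hence finitely generated of rank at most $n$. Your rank estimates for the terms of the chain (using inertness of $\im(g)$ only, for part \ref{Inert:1}) then give $\rk(C)\leq|\Sigma|$ and finite generation of $C$ directly, and for part \ref{Inert:2} one shows each term is inert in $F(\Sigma)$ (via transitivity and closure of inertness under finite intersections) and then uses Takahasi again to pass inertness to the infinite intersection. Two smaller points: your chain $D_0=A\cap B$, $D_{n+1}=\theta(D_n\cap A)$ is not obviously nested ($\theta(A\cap B)$ lies in $B$ but need not lie in $A\cap B$), so you should work with the partial intersections $\bigcap_{|n|\leq k}t^nAt^{-n}$ or with the paper's $H_i$; and you do not need to descend all the way to the two-sided intersection where $\theta$ is an automorphism --- on the one-sided stable domain $\theta$ restricts to an injective endomorphism, and Imrich--Turner and Dicks--Ventura already give the rank bound and inertness for fixed subgroups of monomorphisms.
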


This theorem is relevant to Theorem \ref{thm:rankSETS} as Tardos proved that two-generated subgroups of a free group are inert \cite{Tardos1992intersection} (this is a special case of the Hanna Neumann inequality \cite{Friedman2015Hanna, Mineyev2012Submultiplicativity, Mineyev2012Hanna}).

Theorem \ref{thm:Inert}.\ref{Inert:2} may be generalised to sets of homomorphisms; we do this in Corollary \ref{thm:InertSETS}.

\p{Retracts}
A subgroup $H$ of $F(\Delta)$ is a \emph{retract} if there exists a surjection $\rho:F(\Delta)\twoheadrightarrow H$ such that $\rho$ acts as the identity on $H$.
Retracts, like inert subgroups, are important in the theory of fixed subgroups, and in particular they played a key role in Bergman's result on sets of endomorphisms, mentioned above.
Recently, Antol{\'i}n and Jaikin-Zapirain proved that retracts are inert (see Proposition \ref{prop:inertRetracts}).
By adapting the proof of Theorem \ref{thm:Inert}, this allows us to improve the inequality in Theorem \ref{thm:Inert}.\ref{Inert:2} to a strict inequality for retracts.

\begin{thmletter}
\label{thm:Retract}
Let $g, h: F(\Sigma)\rightarrow F(\Delta)$ be injective homomorphisms.
If both $\im(g)$ and $\im(h)$ are retracts of $\langle \im(g)\cup\im(h)\rangle$ and $\im(g)\neq\im(h)$ then $\rk(\eq(g, h))\lneq|\Sigma|$.
\end{thmletter}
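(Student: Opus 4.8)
The plan is to rerun the proof of Theorem \ref{thm:Inert}.\ref{Inert:2}, isolate the single inequality in its rank count that consumes the inertness of $\im(g)$ (or $\im(h)$), and upgrade it to a strict inequality using the extra rigidity that retracts possess over arbitrary inert subgroups. As in Theorem \ref{thm:Inert}, the first step is to replace the codomain $F(\Delta)$ by $G:=\langle\im(g)\cup\im(h)\rangle$: the equaliser $\eq(g,h)$, the hypothesis $\im(g)\neq\im(h)$, and both retract hypotheses refer only to $G$, and $\rk(\eq(g,h))$ is unaffected. So I assume henceforth that $F(\Delta)=G$, and that $\im(g),\im(h)$ are retracts of $G$, necessarily of rank $|\Sigma|$ since $g,h$ are injective.

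Next I would recall the mechanism behind the bound in Theorem \ref{thm:Inert}.\ref{Inert:1}. Because $h$ is injective, $g$ carries $\eq(g,h)$ isomorphically onto $\{w\in\im(g):\phi(w)=w\}$, where $\phi:=hg^{-1}\colon\im(g)\to\im(h)$; equivalently $\eq(g,h)=\fix(\psi)$ for the monomorphism $\psi:=h^{-1}g$ with domain $U:=g^{-1}(\im(g)\cap\im(h))\leq F(\Sigma)$, and $\rk(U)=\rk(\im(g)\cap\im(h))$ as $g$ is injective. Inertness of $\im(g)$ in $G$ then gives $\rk(U)=\rk(\im(g)\cap\im(h))\leq\rk(\im(h))=|\Sigma|$, while inertness of fixed subgroups of monomorphisms gives $\rk(\eq(g,h))=\rk(\fix(\psi))\leq\rk(U)$; chaining these proves $\rk(\eq(g,h))\leq|\Sigma|$. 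However the proof of Theorem \ref{thm:Inert} is actually organised -- I expect via Stallings-graph pullbacks -- it will produce a chain of this shape, with ``$\rk(\im(g)\cap K)\leq\rk(K)$'' for a suitable $K$ as the step that uses inertness.

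For the strict inequality I split on whether $\rk(\im(g)\cap\im(h))<|\Sigma|$. If it is, then $\rk(\eq(g,h))\leq\rk(\im(g)\cap\im(h))<|\Sigma|$ and we are done. If not, then $L:=\im(g)\cap\im(h)$ is a rank-$|\Sigma|$ subgroup of the rank-$|\Sigma|$ free group $\im(g)$, hence either $L=\im(g)$ or $L$ has infinite index in $\im(g)$; likewise for $\im(h)$. If $L=\im(g)$ then $\im(g)\leq\im(h)$ with equal finite rank; since $\im(g)$ is a retract of $G$ contained in the retract $\im(h)$ it is a retract of $\im(h)$, and a retract of a free group having the same rank as that group is the whole group (an epimorphism between free groups of equal finite rank is an isomorphism, so the retraction is an injective idempotent, hence the identity) -- so $\im(g)=\im(h)$, contradicting the hypothesis; symmetrically if $L=\im(h)$. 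In the remaining case $L$ has full rank $|\Sigma|$ and infinite index in each of $\im(g),\im(h)$. Here the retractions $\rho_g\colon G\twoheadrightarrow\im(g)$ and $\rho_h\colon G\twoheadrightarrow\im(h)$ both restrict to the identity on $L$. Since $\rho_h(\im(g))$ is a subgroup of the free group $\im(h)$ that contains $L$ and is a quotient of $\im(g)$, it has rank exactly $|\Sigma|$, so by Hopficity $\rho_h|_{\im(g)}$ is injective, hence an isomorphism onto $\rho_h(\im(g))$; likewise $\rho_g|_{\im(h)}$ is an isomorphism onto $\rho_g(\im(h))$. Pushing these against each other and transporting retraction structure along the isomorphisms, I would reduce to a situation in which some composite built from $\rho_g$ and $\rho_h$ is an idempotent monomorphism of $\im(g)$ or of $\im(h)$ -- hence the identity -- forcing $\im(g)\leq\im(h)$ or $\im(h)\leq\im(g)$, and therefore $\im(g)=\im(h)$: the desired contradiction.

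The main obstacle is precisely this last case: showing that two distinct retracts of a free group with equal finite rank cannot intersect in a subgroup of full rank. A clean way to package the paper would be to prove this as a lemma and feed its strict-inequality conclusion back into the rank count of Theorem \ref{thm:Inert} at the spot identified in the second paragraph, replacing ``$\rk(\im(g)\cap K)\leq\rk(K)$'' by the strict version whenever $K\not\leq\im(g)$. This is also the point where the theorem of Antol{\'i}n and Jaikin-Zapirain that retracts are inert (Proposition \ref{prop:inertRetracts}) does genuine work: it keeps $\rk(L)\leq|\Sigma|$ under control so that the idempotent-monomorphism rigidity and Hopficity arguments have something to bite on. A secondary, bookkeeping, difficulty is verifying that in whatever graph-theoretic form Theorem \ref{thm:Inert} is proved, the strict drop supplied by the lemma really is a strict drop in $\rk(\eq(g,h))$ and is not absorbed elsewhere.
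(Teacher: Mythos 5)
Your strategy --- localise the strictness in the very first step of the rank count by proving $\rk(\im(g)\cap\im(h))\lneq|\Sigma|$ --- is viable and close in spirit to the paper's proof (Lemma \ref{lem:retractSD}), but two genuine gaps remain. The first is the one you flag as ``the main obstacle'': that two distinct rank-$|\Sigma|$ retracts of $G$ cannot meet in a subgroup of rank $|\Sigma|$. The statement is true, but your sketch does not establish it: from ``$\rho_h(\im(g))$ contains $L$ of rank $|\Sigma|$'' you cannot conclude $\rho_h(\im(g))$ has rank $|\Sigma|$, since rank is not monotone under inclusion in free groups (a free group of rank two contains subgroups of every countable rank), so the Hopficity step has nothing to bite on and the subsequent ``pushing against each other'' never closes. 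The missing ingredient is Bergman's theorem that the intersection of two retracts of a free group is again a retract (\cite[Lemma 18]{bergman1999supports}; condition \ref{Inert:2:4} of Lemma \ref{lem:Inert:Generalisations}). With it the lemma is immediate: $L:=\im(g)\cap\im(h)$ is a retract of $G$, hence of $\im(g)$ and of $\im(h)$; by your own (correct) Hopfian argument a retract of equal finite rank is the whole group, so $\rk(L)=|\Sigma|$ would force $L=\im(g)=\im(h)$, contradicting $\im(g)\neq\im(h)$.

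The second gap is the reduction $\rk(\eq(g,h))\leq\rk(U)$ for $U=g^{-1}(\im(g)\cap\im(h))$. The map $\psi=h^{-1}g$ is defined on $U$ but is not an endomorphism of $U$ (nothing forces $\psi(U)\leq U$), so the Dicks--Ventura inertness of fixed subgroups of monomorphisms does not apply; the set $\{x\in U:\psi(x)=x\}$ is itself an equaliser of exactly the kind you are trying to bound. This is why the paper iterates: $U$ is only the first term $H_1$ of the chain defining $\SD(g,h)$, the correct bound is $\rk(\eq(g,h))\leq\rk(\SD(g,h))$ (Theorem \ref{thm:EqRankBoundedSD}), and one must check that the strict bound $\rk(H_1)\lneq|\Sigma|$ survives passage to the intersection $\SD(g,h)=\cap H_i$. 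The paper does this by showing (via Bergman's lemma at every stage) that each $H_i$ is a retract of $F(\Sigma)$, so the ranks are non-increasing along the chain, and then applying Takahasi's theorem (Proposition \ref{prop:Exercise}). Your ``secondary bookkeeping difficulty'' is therefore a real gap, resolved by exactly this retract structure; with Bergman's intersection lemma and the stable-domain iteration supplied, your outline becomes the paper's proof.
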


The condition in Theorem \ref{thm:Retract} on non-equal images is ruling out the situation where the map $gh^{-1}\in\emo(F(\Sigma))$ (which is defined if $\im(g)=\im(h)$) is an automorphism of $F(\Sigma)$, and here $\eq(g, h)=\fix(gh^{-1})$.

As with Theorem \ref{thm:Inert}, Theorem \ref{thm:Retract} may be generalised to sets of homomorphisms; we do this in Corollary \ref{thm:RetractSETS}.

As well as the Equaliser Conjecture, in 1984 Stallings asked if it is possible to compute the rank of a given equaliser \cite[Problems P3 \& 5]{Stallings1987Graphical}.
Recent results settle this algorithmic problem for certain classes of free group homomorphisms \cite{Bogopolski2016algorithm, Feighn2018algorithmic, ciobanu2020fixed, ciobanu_et_al:LIPIcs:2020:12527}, but despite this progress its solubility remains open in general.
We resolve this problem for retracts.

\begin{thmletter}
\label{thm:RetractAlgorithmic}
There exists an algorithm with input a pair of injective homomorphisms $g, h: F(\Sigma)\rightarrow F(\Delta)$ such that both $\im(g)$ and $\im(h)$ are retracts of $\langle \im(g)\cup\im(h)\rangle$, and output a basis for $\eq(g, h)$.
\end{thmletter}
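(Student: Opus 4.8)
The plan is to reduce to two extreme situations according to whether $\im(g)=\im(h)$, a dichotomy that is itself decidable: one computes the Stallings (folded) graphs of $\im(g)$ and of $\im(h)$ from the given generating sets $\{g(\sigma):\sigma\in\Sigma\}$ and $\{h(\sigma):\sigma\in\Sigma\}$ and checks whether they are isomorphic as $\Delta$-labelled based graphs.

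When $\im(g)=\im(h)=:I$, both $g$ and $h$ are isomorphisms onto $I$, so $\phi:=h^{-1}g$ is an \emph{automorphism} of $F(\Sigma)$ and $\eq(g,h)=\fix(\phi)$. One can write $\phi$ down: since $\{h(\tau):\tau\in\Sigma\}$ is a basis of $I$, the Stallings graph of $I$ expresses each $g(\sigma)$ as an explicit word $w_\sigma$ in that basis, and then $\phi(\sigma)=w_\sigma$, read as a word over $\Sigma$. A basis of $\fix(\phi)$ is now returned by the Bogopolski--Maslakova algorithm \cite{Bogopolski2016algorithm}. I would stress that Theorem~\ref{thm:Retract} says nothing in this case --- and cannot, since $\rk(\eq(g,h))$ may equal $|\Sigma|$ here, for instance when $\phi=\mathrm{id}$ --- so this case genuinely requires a separate, external input; excluding it is precisely the role of the hypothesis $\im(g)\neq\im(h)$ in Theorem~\ref{thm:Retract}.

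When $\im(g)\neq\im(h)$, Theorem~\ref{thm:Retract} applies and gives $\rk(\eq(g,h))\leq|\Sigma|-1$; the point, however, is that its proof (through that of Theorem~\ref{thm:Inert}) is \emph{effective} on the inputs at hand. I would argue this step by step: the Stallings graphs of $\im(g)$, $\im(h)$ and of $M:=\langle\im(g)\cup\im(h)\rangle$ are computable; the fibre-product and folding constructions used to realise the relevant intersections are computable; and the concluding counting argument, run on these concrete graphs, outputs not merely a numerical bound but an explicit finite subset of $F(\Sigma)$ generating $\eq(g,h)$, from which a basis is obtained by Nielsen reduction. The one place where the \emph{promise} in the hypothesis is needed (rather than an a priori bound) is to produce the retractions $\rho_g\colon M\twoheadrightarrow\im(g)$ and $\rho_h\colon M\twoheadrightarrow\im(h)$ driving the proof of Theorem~\ref{thm:Retract}: such a retraction is a homomorphism determined by the images of a computable basis of $M$, those images lying in $\im(g)$, subject to the decidable requirement that it fix a basis of $\im(g)$; since the hypothesis guarantees one exists, I would locate it by an unbounded search over this countable family, and likewise for $\rho_h$.

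The hard part will be the effectivity of the proof of Theorem~\ref{thm:Retract}: one must verify that each construction in it --- the fibre products, the bookkeeping with ranks and Euler characteristics, and above all the use of the retractions to exhibit the spanning structure witnessing the strict inequality --- can be performed by an algorithm, and, crucially, that the argument is genuinely \emph{constructive}, returning an explicit spanning set of $\eq(g,h)\leq F(\Sigma)$ and not only a bound on its cardinality. With that in hand, the two cases together establish the theorem.
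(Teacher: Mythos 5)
Your first case ($\im(g)=\im(h)$) is correct and essentially what the paper does: there $\SD(g,h)=F(\Sigma)$, $\phi=h^{-1}g$ is an automorphism, and Bogopolski--Maslakova finishes. The genuine gap is in the case $\im(g)\neq\im(h)$, where you propose to ``make the proof of Theorem~\ref{thm:Retract} effective.'' That proof is a pure rank-counting argument: it combines $\rk(\eq(g,h))\leq\rk(\SD(g,h))$ (which rests on the Imrich--Turner bound $\rk(\fix(\phi))\leq\rk(F)$ and on Takahasi's Proposition~\ref{prop:Exercise}) with $\rk(\SD(g,h))\lneq|\Sigma|$. Neither ingredient produces, or can be massaged into producing, a generating set for $\eq(g,h)$: the equaliser of two homomorphisms is not an intersection of subgroups, so Stallings-graph fibre products do not compute it, and a rank upper bound gives no termination criterion for any search (one cannot certify that a finitely generated subgroup of $\eq(g,h)$ is all of it, since the true rank may be strictly below the bound). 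The paper itself stresses that computing $\fix$ of a general injective endomorphism is open; only the automorphism case is known. Your unbounded search for the retractions $\rho_g,\rho_h$ is also a red herring --- the argument never needs the retraction maps themselves, only the structural consequences of their existence.

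The missing idea is the paper's actual algorithm. One computes the chain $H_0=F(\Sigma)$, $H_{i+1}=g^{-1}(g(H_i)\cap h(H_i))$ explicitly (intersections and preimages under an injection are computable via Stallings graphs); the retract hypothesis forces each $H_i$ to be a retract of $F(\Sigma)$ (Lemma~\ref{lem:retractSD}), so each strict step strictly decreases rank and the chain stabilises at $\SD(g,h)$ within $|\Sigma|$ steps --- this is what makes $\SD(g,h)$ computable (Lemma~\ref{lem:algorithmicSD}). One then writes down the endomorphism $\phi_{(g,h)}=h^{-1}\circ g$ of $\SD(g,h)$, observes that its image is a retract of $\SD(g,h)$, and so computes its stable image $\phi_{(g,h)}^{\infty}=\SD(\operatorname{id},\phi_{(g,h)})$ by the same stabilising-chain procedure. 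Since $\phi_{(g,h)}$ restricts to an \emph{automorphism} of $\phi_{(g,h)}^{\infty}$ and $\eq(g,h)=\fix(\phi_{(g,h)}|_{\phi_{(g,h)}^{\infty}})$, Bogopolski--Maslakova then returns a basis. Without this reduction to the automorphism case, the second half of your argument does not go through.
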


Again, Theorem \ref{thm:RetractAlgorithmic} may be generalised to sets of homomorphisms; we do this in Corollary \ref{thm:RetractAlgorithmicSETS}.

\p{Inertly induced maps}
A pair of homomorphisms $g, h: F(\Sigma)\rightarrow F(\Delta)$ is \emph{inertly induced} if the pair can be viewed as the restrictions of a pair of homomorphisms $g', h': F(\Sigma')\rightarrow F(\Delta')$ such that $\im(g')$ and $\im(h')$ are inert subgroups of $\langle \im(g')\cup\im(h')\rangle$ (here $\Sigma\subset F(\Sigma')$ and $\Delta\subset F(\Delta')$; see Section \ref{sec:induced} for the formal definition and an example). Our next result follows quickly from Theorem \ref{thm:Inert}, and answers the Equaliser Conjecture for inertly induced pairs.

\begin{corolletter}
\label{corol:InertiaInduced}
Let $g, h: F(\Sigma)\rightarrow F(\Delta)$ be an inertly induced pair of homomorphisms. If $g$ or $h$ is injective then $\rk(\eq(g, h))\leq |\Sigma|$.
\end{corolletter}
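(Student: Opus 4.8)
The plan is to reduce Corollary~\ref{corol:InertiaInduced} to Theorem~\ref{thm:Inert}.\ref{Inert:1} by exploiting how equalisers behave under restriction. Suppose $g, h$ are inertly induced, witnessed by injective maps $g', h': F(\Sigma')\rightarrow F(\Delta')$ with $\Sigma\subset F(\Sigma')$, $\Delta\subset F(\Delta')$, $g'|_{F(\Sigma)}=g$, $h'|_{F(\Sigma)}=h$ (after identifying $F(\Delta)$ with the subgroup $\langle\Delta\rangle\leq F(\Delta')$), and with $\im(g')$, $\im(h')$ inert in $L:=\langle\im(g')\cup\im(h')\rangle$. First I would observe the basic compatibility $\eq(g, h)=\eq(g', h')\cap F(\Sigma)$: if $x\in F(\Sigma)$ then $g(x)=g'(x)$ and $h(x)=h'(x)$, so $g(x)=h(x)$ iff $g'(x)=h'(x)$. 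Thus $\eq(g,h)$ is the intersection of $\eq(g', h')$ with the subgroup $F(\Sigma)\leq F(\Sigma')$.

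Next I would handle injectivity. The corollary only assumes $g$ or $h$ is injective, whereas Theorem~\ref{thm:Inert} wants \emph{both} $g'$ and $h'$ injective — but that is already part of the definition of inertly induced, so there is nothing to do there; the role of the hypothesis ``$g$ or $h$ is injective'' is different. If $g$ (say) is injective, I want to invoke Theorem~\ref{thm:Inert}.\ref{Inert:1} to bound $\rk(\eq(g', h'))$. That theorem applies directly: $g', h'$ are injective and $\im(g')$ is inert in $\langle\im(g')\cup\im(h')\rangle$, so $\rk(\eq(g', h'))\leq|\Sigma'|$. This is a bound in terms of $|\Sigma'|$, which may be much larger than $|\Sigma|$, so the bound on $\eq(g', h')$ alone is not enough — I must use the intersection structure to cut it down.

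The key step is then an inertness argument applied inside $F(\Sigma')$. I would note that $\eq(g', h')$ is in fact inert in $F(\Sigma')$: since both $\im(g')$ and $\im(h')$ are inert in $L$, Theorem~\ref{thm:Inert}.\ref{Inert:2} (applied to $g', h'$) gives that $\eq(g', h')$ is an inert subgroup of $F(\Sigma')$. Hence for the particular subgroup $F(\Sigma)\leq F(\Sigma')$ we get
\[
\rk(\eq(g, h))=\rk(\eq(g', h')\cap F(\Sigma))\leq\rk(F(\Sigma))=|\Sigma|,
\]
which is exactly the conclusion. (One subtlety: if $g$ is injective but $h$ is not, then $\eq(g,h)$ still makes sense, and the argument above still works since $g', h'$ are both injective regardless; the non-injectivity of $h$ plays no role once we pass to $g', h'$, and the case ``$\rk\leq|\Sigma|$'' already subsumes the possibility that $\eq(g,h)$ is, say, cyclic or trivial.) Finally I should double-check that the identification $F(\Sigma)\hookrightarrow F(\Sigma')$ is as a genuine subgroup so that $\rk(F(\Sigma))=|\Sigma|$ with no collapse — this is built into the formal definition of inertly induced in Section~\ref{sec:induced}, where $\Sigma$ is taken to be a subset of a basis' worth of independent elements, so I would cite that.

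The main obstacle is essentially bookkeeping: making sure the restriction identities $\eq(g,h)=\eq(g',h')\cap F(\Sigma)$ and the containment $F(\Sigma)\leq F(\Sigma')$ are set up correctly against whatever the formal definition in Section~\ref{sec:induced} actually says, and confirming that ``$g$ or $h$ injective'' together with the inertly-induced hypothesis really does license the use of Theorem~\ref{thm:Inert}.\ref{Inert:2}. There is no hard analytic content here — the whole proof is a two-line deduction from Theorem~\ref{thm:Inert} once the definitions are unwound, which is presumably why it is stated as a corollary.
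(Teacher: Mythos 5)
Your proposal is correct and is essentially the paper's own proof: both identify $\eq(g,h)$ with $\iota(F(\Sigma))\cap\eq(g',h')$, invoke Theorem~\ref{thm:Inert}.\ref{Inert:2} to get that $\eq(g',h')$ is inert in $F(\Sigma')$, and conclude $\rk(\eq(g,h))\leq\rk(F(\Sigma))=|\Sigma|$. (The detour through Theorem~\ref{thm:Inert}.\ref{Inert:1} is unnecessary, and like the paper you read injectivity of $g'$ and $h'$ into the definition of inertly induced, which is the intended reading.)
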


Restricting to $|\Sigma'|=2$ gives a stronger result: Define a set of maps $S: F(\Sigma)\rightarrow F(\Delta)$ to be \emph{$F_2$-induced} if the set can be viewed as the restrictions of a set of homomorphisms $S': F(a', b')\rightarrow F(\Delta')$ (see Example \ref{ex:InertlyInduced}).

\begin{corolletter}
\label{corol:F2Induced}
Let $S: F(\Sigma)\rightarrow F(\Delta)$ be an $F_2$-induced set of homomorphisms. If $S$ contains an injection then $\rk(\eq(S))\leq |\Sigma|$.
\end{corolletter}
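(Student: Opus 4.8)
The plan is to deduce Corollary \ref{corol:F2Induced} from Theorem \ref{thm:rankSETS}.\ref{rankSETS:injONLY} by a straightforward transfer argument, using the fact that restricting homomorphisms to a subgroup commutes with taking equalisers. First I would unpack the definition: by hypothesis there is a set $S': F(a', b') \rightarrow F(\Delta')$ of homomorphisms together with an embedding $F(\Sigma) \hookrightarrow F(a', b')$ (identifying $\Sigma$ with a subset of $F(a', b')$) and an embedding $F(\Delta)\hookrightarrow F(\Delta')$, so that each $g \in S$ is the restriction $g'|_{F(\Sigma)}$ of the corresponding $g' \in S'$, with image landing in the copy of $F(\Delta)$. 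Since $S$ contains an injection $g$, the corresponding $g' \in S'$ restricts injectively to $F(\Sigma)$; note this does \emph{not} immediately make $g'$ itself injective, which is the subtlety the proof must respect.

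The key step is the identity $\eq(S) = \eq(S') \cap F(\Sigma)$. This holds because for $x \in F(\Sigma)$ we have $g(x) = g'(x)$ for every $g \in S$ (under the identifications), so $x \in \eq(S)$ iff $g'(x) = h'(x)$ for all $g', h' \in S'$ iff $x \in \eq(S')$; combined with $x \in F(\Sigma)$ this gives the claimed equality. Now I split on whether $S'$ contains an injective map. If it does, then by Theorem \ref{thm:rankSETS}.\ref{rankSETS:injONLY} we have $\rk(\eq(S')) \leq 2$, and since a subgroup of a free group of rank at most two is inert (Tardos \cite{Tardos1992intersection}, as invoked after Theorem \ref{thm:Inert}), intersecting with $F(\Sigma)$ cannot raise the rank above $\rk(F(\Sigma)) = |\Sigma|$, giving $\rk(\eq(S)) \leq |\Sigma|$. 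If $S'$ contains \emph{no} injective map, I would instead argue directly: the injection $g \in S$ is the restriction of some $g' \in S'$, and $\eq(S) \leq \eq(g, h) \subseteq g^{-1}(\im g \cap \im h)$ for any other $h \in S$; since $g$ is injective with image a free group, $\eq(g,h)$ has finite rank by Goldstein \cite{Goldstein1986Fixed}, and in fact one can run the rank-two analysis on the pair $(g,h)$ where $g$ is injective — but the cleanest route is to observe that $g$ injective means we may replace the ambient map $g'$ by the genuinely injective restriction and apply Theorem \ref{thm:Inert}.\ref{Inert:1} after noting $\im(g)$, being a subgroup of $F(\Delta) \leq \langle \im g \cup \im h\rangle$ of no help unless inert — so here I'd fall back on Corollary \ref{corol:InertiaInduced} with $\Sigma' = \{a', b'\}$, since an $F_2$-induced set is in particular inertly induced (two-generated images are inert by Tardos).

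So the honest structure is: an $F_2$-induced set is inertly induced (the ambient images $\im(g'), \im(h')$ lie in $\langle \im(g') \cup \im(h')\rangle$ which is a subgroup of $F(\Delta')$, and being subgroups of a group generated by at most two elements — wait, that need not be rank two — so one instead notes $\langle \im g' \cup \im h'\rangle$ has rank at most... no). Let me restate: the point is simply that $\eq(S) = \eq(S') \cap F(\Sigma)$, that $\eq(S')$ is itself an equaliser of maps from $F(a',b')$, hence has rank at most two by Theorem \ref{thm:rankSETS} (parts \ref{rankSETS:injONLY}, \ref{rankSETS:injANDnoninj}, or \ref{rankSETS:noninjONLY} — and crucially, since $\eq(S) \ni$ nontrivial data forces $\eq(S')$ to be finitely generated, we are never in the non-finitely-generated case once we know $\eq(S)$ must be controlled; but more carefully, we do not know a priori $S'$ has an injection). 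The hard part — and the step I expect to be the main obstacle — is precisely this: handling the case where $S$ has an injection but the ambient $S'$ does not. Here $\eq(S') $ could fail to be finitely generated by part \ref{rankSETS:noninjONLY}, yet its intersection with $F(\Sigma)$ must still have rank $\leq |\Sigma|$. To resolve it I would argue that if $g \in S$ is injective then $\eq(S) \leq \eq(g, h)$ for a fixed $h\in S$, and $\eq(g,h)$ has rank $\leq |\Sigma|$ \emph{provided} we are in a setting where Theorem \ref{thm:Inert}.\ref{Inert:1} applies; failing a direct inertness hypothesis on $\im(g)$ inside $\langle \im g \cup \im h\rangle$, I would instead pull back: the injection $g$ extends to $g'$, restrict \emph{all} of $S'$ to $F(\Sigma)$, and observe this restricted set $S|_{F(\Sigma)} = S$ now contains an injection and maps into $F(\Delta)$, so if it \emph{also} contained a non-injection we would be in case \ref{rankSETS:injANDnoninj} with bound $1 \leq |\Sigma|$, and if every map in $S$ is injective we cite Theorem \ref{thm:rankSETS}.\ref{rankSETS:injONLY} on the pair generating the relevant two-generated overgroup — concluding via Theorem \ref{thm:Inert}.\ref{Inert:1} with the Tardos inertness of two-generated subgroups. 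I would write the final proof to route entirely through Corollary \ref{corol:InertiaInduced} once the $F_2$-induced hypothesis is seen to supply the inert-induction data, keeping the case analysis on injectivity of $S$ itself rather than of $S'$.
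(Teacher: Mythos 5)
Your main branch is exactly the paper's proof: identify $\iota(\eq(S))$ with $\iota(F(\Sigma))\cap\eq(S')$, get $\rk(\eq(S'))\leq 2$ from Theorem \ref{thm:rankSETS}, and use Tardos' theorem that subgroups of rank at most two are inert to conclude $\rk(\eq(S'))\cap\iota(F(\Sigma)))\leq\rk(\iota(F(\Sigma)))=|\Sigma|$. That part is correct, and you are also right to flag the subtlety the paper's two-line proof passes over silently: injectivity of $g=g'|_{F(\Sigma)}$ does not give injectivity of $g'$, so Theorem \ref{thm:rankSETS} might a priori only tell you that $\eq(S')$ is not finitely generated.

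The problem is that your treatment of that secondary case never actually closes. The detour through Corollary \ref{corol:InertiaInduced} does not apply to sets (it is stated for pairs, and intersecting infinitely many subgroups of rank $\leq|\Sigma|$ does not bound the rank of the intersection without inertness); applying Theorem \ref{thm:rankSETS} to $S$ itself is illegitimate since the domain of $S$ is $F(\Sigma)$, not $F(a,b)$; and the paragraph contains several abandoned false starts rather than a completed argument. The clean resolution is one line: a non-injective homomorphism $g'\colon F(a',b')\to F(\Delta')$ has cyclic image (its image is free and is a proper quotient of $F_2$, hence has rank at most $1$ because free groups are Hopfian), so if $S'$ contains no injection yet some restriction $g'|_{\iota(F(\Sigma))}$ is injective, then $F(\Sigma)$ embeds in a cyclic group, forcing $|\Sigma|\leq 1$ and making the bound $\rk(\eq(S))\leq\rk(F(\Sigma))\leq|\Sigma|$ trivial. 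Inserting that observation (and noting the degenerate case $|S'|=1$, where $\eq(S')=F(a',b')$) completes your case split and in fact tightens the paper's own argument.
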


\p{Outline of the paper}
In Section \ref{sec:SD} we introduce and study the ``stable domain of $g$ with $h$'', $\SD(g, h)$, which is a device for studying the equaliser of two homomorphisms and which generalises the stable image of a free group endomorphism.
In Section \ref{sec:inert} we prove Theorem \ref{thm:Inert}, regarding inertness.
In Section \ref{sec:F2} we prove our main result, Theorem \ref{thm:rankSETS}.
In Section \ref{sec:Retract} we prove Theorems \ref{thm:Retract} and \ref{thm:RetractAlgorithmic}, regarding retracts.
In Section \ref{sec:induced} we prove Corollaries \ref{corol:InertiaInduced} and \ref{corol:F2Induced}, on intertly induced pairs.
Section \ref{sec:Concluding} is a brief discussion on stable domains.
\ref{appendix:EquivalentConj} proves that Conjectures \ref{Qn:StallingsRank} and \ref{Qn:Inert} are equivalent.

\p{Acknowledgements}
I would like to thank Laura Ciobanu for introducing me to the area and for many fruitful discussions about this project,
and Jean-Pierre Mutanguha and the anonymous referee for directing me to the work of Antol{\'i}n--Jaikin-Zapirain, and for a wide variety other extremely helpful comments and suggests for improvement.
This research was supported by EPSRC grant EP/R035814/1.

%%%----------------------------------------------%%%
%%%------------The stable domain------------%%%
%%%----------------------------------------------%%%

\section{Equalisers as fixed subgroups}
\label{sec:SD}
In this section we view equalisers as fixed subgroups, as explained below. This view may alter the rank of the ambient free group, but crucially the rank is preserved under the assumptions of Theorem \ref{thm:Inert}, and for the free group of rank two.

We begin with a lemma which, under very specific conditions, allows us to view equalisers as fixed subgroups.
If $g, h: F(\Sigma)\rightarrow F(\Delta)$ are homomorphisms with $h$ injective and $\im(g)\leq\im(h)$ then we may define:
\begin{align*}
\psi_{(g, h)}: F(\Sigma)&\rightarrow F(\Sigma)\\
x&\mapsto h^{-1}(g(x))
\end{align*}
Here we can apply $h^{-1}$ to $g(x)$ as $\im(g)\leq\im(h)$, and the map $\psi_{(g, h)}$ is a function (hence homomorphism) as $h$ is injective.

\begin{lemma}
\label{lem:FixVsEq}
Let $g, h: F(\Sigma)\rightarrow F(\Delta)$ be homomorphisms with $h$ injective and $\im(g)\leq\im(h)$. Then $\eq(g, h)=\fix(\psi_{(g, h)})$.
\end{lemma}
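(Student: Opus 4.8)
This is a straightforward unwinding of definitions, so the plan is to simply chase an element through both sides of the claimed equality.

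\medskip

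\noindent\textbf{Proof proposal.}
The plan is to prove the two inclusions $\eq(g,h)\subseteq\fix(\psi_{(g,h)})$ and $\fix(\psi_{(g,h)})\subseteq\eq(g,h)$ by a direct element chase, using the fact that $h$ is injective to move freely between a statement about elements of $F(\Delta)$ and the corresponding statement about their $h$-preimages in $F(\Sigma)$.

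First I would take $x\in F(\Sigma)$ and observe that, since $h$ is injective, $x\in\fix(\psi_{(g,h)})$ means $\psi_{(g,h)}(x)=x$, i.e. $h^{-1}(g(x))=x$; applying the injective homomorphism $h$ to both sides, this is equivalent to $g(x)=h(x)$, which is exactly the condition $x\in\eq(g,h)$. The only point requiring a word of care is that $\psi_{(g,h)}(x)=h^{-1}(g(x))$ is well-defined precisely because $\im(g)\leq\im(h)$ (so $g(x)$ lies in the domain of $h^{-1}$) and because $h$ injective makes $h^{-1}$ a genuine function on $\im(h)$; these are exactly the standing hypotheses, and were already noted in the excerpt just before the lemma statement. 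Since each step in the chain is an equivalence, both inclusions follow simultaneously, giving $\eq(g,h)=\fix(\psi_{(g,h)})$.

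There is no real obstacle here: the statement is essentially a definitional reformulation, and the ``hard part'' is only the bookkeeping of checking that $\psi_{(g,h)}$ is a well-defined homomorphism, which the hypotheses $h$ injective and $\im(g)\leq\im(h)$ handle and which the paper has already addressed in the paragraph preceding the lemma. I would therefore keep the proof to two or three lines, presenting the equivalences $x\in\fix(\psi_{(g,h)}) \iff h^{-1}(g(x))=x \iff g(x)=h(x) \iff x\in\eq(g,h)$, with the middle step justified by injectivity of $h$.
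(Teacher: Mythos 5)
Your proof is correct and follows essentially the same route as the paper: both are a direct definitional unwinding, using injectivity of $h$ to pass between $h^{-1}(g(x))=x$ and $g(x)=h(x)$, with $\im(g)\leq\im(h)$ guaranteeing that $\psi_{(g,h)}$ is well-defined. No gaps.
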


\begin{proof}
Clearly $\fix(\psi_{(g, h)})\leq\eq(g, h)$, while $\eq(g, h)\leq\fix(\psi_{(g, h)})$ since if $g(x)=h(x)$ then $h^{-1}(g(x))=x$, as $h$ is injective, and clearly $x\in h^{-1}(\im(g))$.
\end{proof}

The goal of this section is to take two maps and restrict their domain in such a way that we may apply Lemma \ref{lem:FixVsEq} to understand their equaliser.

\p{The stable domain}
For endomorphisms $\phi: F\rightarrow F$ the \emph{stable image} of $\phi$ is $\phi^{\infty}(F):=\cap_{i=0}^{\infty}\phi^i(F)$. Imrich and Turner used this gadget to prove that $\rk(\fix(\phi))\leq\rk(F)$ \cite{Imrich1989Endomorphisms}.
We now generalise this construction to equalisers.

Let $g, h: F(\Sigma)\rightarrow F(\Delta)$ be homomorphisms. Define $H_0=F(\Sigma)$, and inductively define $H_{i+1}=g^{-1}(g(H_i)\cap h(H_i))$. Then define
\[
\SD(g, h):=\cap_{i=0}^{\infty}H_i.
\]
We call $\SD(g, h)$ the \emph{stable domain of $g$ with $h$}.
(The construction is not symmetric; see Example \ref{ex:symmetry}.)
The name ``stable domain'' is because we can use the restrictions $g|_{\SD(g, h)}$ and $h|_{\SD(g, h)}$ to understand $\eq(g, h)$ (see Lemma \ref{lem:SDandEq}).
By taking $\Sigma=\Delta$ and $g$ to be the identity map, we see that the stable image is a special case of the stable domain.

We start by characterising stable domains. The proof of the lemma uses an inductive argument, and the same basic argument is used in many of our proofs below.
In the following we mean maximal with respect to inclusion.

\begin{lemma}
\label{lem:CompareImages}
Let $h$ be injective. Then $\SD(g, h)$ is the maximal subgroup $K$ of $F(\Sigma)$ such that $g(K)\leq h(K)$.
\end{lemma}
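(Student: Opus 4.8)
The plan is to prove the two defining properties of the maximal such subgroup separately: first that $\SD(g,h)$ itself satisfies $g(\SD(g,h)) \leq h(\SD(g,h))$, and second that any subgroup $K$ with $g(K) \leq h(K)$ is contained in $\SD(g,h)$. For the second (and easier) half, I would argue by induction that $K \leq H_i$ for all $i$. The base case $K \leq H_0 = F(\Sigma)$ is trivial. For the inductive step, assume $K \leq H_i$; then $g(K) \leq g(H_i)$ and $g(K) \leq h(K) \leq h(H_i)$, so $g(K) \leq g(H_i) \cap h(H_i)$, whence $K \leq g^{-1}(g(H_i) \cap h(H_i)) = H_{i+1}$. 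Taking the intersection over all $i$ gives $K \leq \SD(g,h)$.

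For the first half I must show $g(\SD(g,h)) \leq h(\SD(g,h))$. This is where injectivity of $h$ is essential, and I expect it to be the main obstacle: the subtlety is that $g$ and $h$ need not commute with infinite intersections, so $g(\cap_i H_i)$ could in principle be smaller than $\cap_i g(H_i)$. The key observation to get around this is that the chain $H_0 \geq H_1 \geq H_2 \geq \cdots$ is descending, and more importantly that $h(H_i) \geq g(H_{i+1})$ holds at every finite stage essentially by construction — indeed $g(H_{i+1}) = g(g^{-1}(g(H_i) \cap h(H_i))) = g(H_i) \cap h(H_i) \leq h(H_i)$, using that the image of a preimage of a subgroup of $\im(g)$ is that subgroup. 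So on the level of images we have $g(H_{i+1}) \leq h(H_i) \cap g(H_i)$. Now I want to pass to the limit. Given $w \in g(\SD(g,h))$, write $w = g(x)$ with $x \in \bigcap_i H_i$; then for each $i$, $x \in H_{i+1}$, so $w = g(x) \in g(H_{i+1}) \leq h(H_i)$, giving $w \in \bigcap_i h(H_i)$. Since $h$ is injective, $h^{-1}(w)$ is a single well-defined element $y \in F(\Sigma)$, and $h^{-1}(h(H_i)) = H_i$ because $H_i$ is a subgroup (here injectivity makes $h$ a bijection onto its image, so it respects intersections in both directions); hence $y \in \bigcap_i H_i = \SD(g,h)$, and $w = h(y) \in h(\SD(g,h))$, as required.

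So the structure is: (i) the easy induction showing $\SD(g,h)$ dominates every candidate $K$; (ii) the computation $g(H_{i+1}) = g(H_i) \cap h(H_i) \leq h(H_i)$ at each finite level; (iii) a limiting argument that upgrades this to $g(\SD(g,h)) \leq h(\SD(g,h))$, crucially using that injectivity of $h$ lets $h^{-1}$ act as a genuine inverse on $\im(h)$ and therefore commute with the descending intersection. Combining (i) and (iii) shows $\SD(g,h)$ is a subgroup with the stated property that contains all others, i.e. it is the maximum. The only point requiring care in writing this up cleanly is articulating why $h^{-1}(\bigcap_i h(H_i)) = \bigcap_i H_i$ — this is immediate once one notes $h$ restricts to a group isomorphism $F(\Sigma) \to \im(h)$, so it is a lattice isomorphism on subgroups and in particular preserves arbitrary intersections.
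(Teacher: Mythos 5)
Your proposal is correct and follows essentially the same route as the paper: maximality via induction showing $K\leq H_i$ for all $i$, and the containment $g(\SD(g,h))\leq h(\SD(g,h))$ by observing $g(x)\in h(H_i)$ for every $i$ and then using injectivity of $h$ to identify $\bigcap_i h(H_i)$ with $h\bigl(\bigcap_i H_i\bigr)$. One minor caveat: your side remark that the chain $H_0\geq H_1\geq\cdots$ is descending is not justified under the lemma's hypotheses (the paper only establishes nestedness when $g$ is injective), but your argument never actually uses it, so nothing is affected.
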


\begin{proof}
We first prove that $g({\SD(g, h)})\leq h({\SD(g, h)})$.
So, let $x\in \SD(g, h)$. Then $x\in H_i$ for all $i\geq0$. Hence, for all $j\geq1$ there exists some $y_j\in g(H_j)\cap h(H_j)$ such that $x\in g^{-1}(y_j)$. Then $g(x)=y_j\in h(H_j)$, and so $g(x)\in h(H_j)$ for all $j\geq0$. Hence, $g(x)\in \cap h(H_j)$. By injectivity of $h$, we have $\cap h(H_j)=h(\cap H_j)=h({\SD(g, h)})$, and so $g(x)\in h({\SD(g, h)})$ as required.

For maximality, suppose $K\leq F(\Sigma)$ is such that $g(K)\leq h(K)$.
Clearly $K\leq H_0=F(\Sigma)$. If $K\leq H_i$ then $g(K)\leq g(H_i)\cap h(H_i)$, and so $K\leq g^{-1}(g(H_i)\cap h(H_i))=H_{i+1}$. Hence, by induction $K\leq H_i$ for all $i\geq0$, and so $K\leq\cap H_i=\SD(g, h)$ as required.
\end{proof}

We now give two examples of stable domains. Our first example shows that $\SD(g, h)\neq\SD(h, g)$ in general, even if both maps are injective. Later, in Proposition \ref{prop:SDsymmetry}, we  classify when $\SD(g, h)=\SD(h, g)$ for $g$, $h$ injective.

\begin{example}
\label{ex:symmetry}
Define $g:F(x, y)\rightarrow F(a, b)$ by $g: x\mapsto a^2, y\mapsto b$ and $h: x\mapsto a, y\mapsto b^2$. As $g(\langle x\rangle)\leq h(\langle x\rangle)$, we have that $x\in\SD(g, h)$ by Lemma \ref{lem:CompareImages}, and similarly $y\in\SD(h, g)$.
On the other hand, $y\not\in\SD(g, h)$ and $x\not\in\SD(h, g)$, as $\im(g)\cap\im(h)$ is a proper subgroup of both $\im(g)$ and $\im(h)$, and so neither stable domain is the whole of $F(x, y)$. Hence, $\SD(g, h)\neq\SD(h, g)$.
\end{example}

Later, in Theorem \ref{thm:EqRankBoundedSD}, we see that it is important to understand the rank of the stable domain $\SD(g, h)$ when $h$ is injective. Unfortunately, as our next example shows, stable domains are not necessarily finitely generated under this restriction.

\begin{example}
\label{ex:finiteGen}
Define $g: x\mapsto ab, y\mapsto1$ and $h: x\mapsto a^2, y\mapsto b^2$. Then $\im(g)\cap\im(h)$ is trivial, and so the subgroup $H_1$ in the definition of the stable domain is $\ker(g)$. We then see inductively that $H_i=\ker(g)$ for all $i\geq0$, and so $\SD(g, h)=\ker(g)$. As $\ker(g)$ is a normal subgroup of infinite index in $F(\Sigma)$, we have that $\SD(g, h)=\ker(g)$ is not finitely generated.
\end{example}

The above example is in fact extreme, as $\eq(g, h)$ is not finitely generated while $\eq(h, g)$ is trivial (because $h$ is injective and $\im(g)\cap\im(h)$ is trivial).

\p{Equalisers as fixed subgroups}
We now explain how to use stable domains to view equalisers as fixed subgroups.
Firstly, we can use the restrictions $g|_{\SD(g, h)}$ and $h|_{\SD(g, h)}$ to understand $\eq(g, h)$.

\begin{lemma}
\label{lem:SDandEq}
$\eq(g, h)=\eq(g|_{\SD(g, h)}, h|_{\SD(g, h)})$.
\end{lemma}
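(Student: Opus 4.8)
The plan is to prove the two inclusions separately, and the only real content is the containment $\eq(g, h)\leq\SD(g, h)$.

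First, unwinding definitions, $\eq(g|_{\SD(g, h)}, h|_{\SD(g, h)})$ is precisely the set of $x\in\SD(g, h)$ with $g(x)=h(x)$, that is, $\eq(g, h)\cap\SD(g, h)$. Hence the inclusion $\eq(g|_{\SD(g, h)}, h|_{\SD(g, h)})\leq\eq(g, h)$ is immediate, and it remains to show $\eq(g, h)\leq\SD(g, h)$; this yields $\eq(g, h)=\eq(g, h)\cap\SD(g, h)$ and so the reverse inclusion as well.

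For $\eq(g, h)\leq\SD(g, h)$ I would put $K:=\eq(g, h)$, which is a subgroup of $F(\Sigma)$, and observe that since $g$ and $h$ agree pointwise on $K$ the images coincide as subsets, $g(K)=h(K)$; in particular $g(K)\leq h(K)$. Then I would run the same inductive argument as in the maximality part of the proof of Lemma \ref{lem:CompareImages} (which does not use injectivity of $h$): clearly $K\leq H_0=F(\Sigma)$, and if $K\leq H_i$ then $g(K)\leq g(H_i)$ while $g(K)=h(K)\leq h(H_i)$, so $g(K)\leq g(H_i)\cap h(H_i)$ and therefore $K\leq g^{-1}(g(H_i)\cap h(H_i))=H_{i+1}$. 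By induction $K\leq H_i$ for every $i\geq0$, whence $K\leq\cap_i H_i=\SD(g, h)$, as required.

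I do not expect a genuine obstacle here; the one point to watch is that the argument must not secretly invoke injectivity of $h$, since Lemma \ref{lem:SDandEq} is stated without that hypothesis. The induction above uses only the trivial fact $g^{-1}(g(K))\supseteq K$ together with the set-equality $g(K)=h(K)$, both of which hold for arbitrary homomorphisms, so the proof goes through in full generality.
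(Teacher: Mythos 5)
Your proof is correct and is essentially the paper's own argument: both reduce the lemma to showing $\eq(g,h)\leq\SD(g,h)$ and then run the same induction on the $H_i$, the only cosmetic difference being that you argue at the level of the subgroup $K=\eq(g,h)$ while the paper argues elementwise with $x\in\eq(g,h)$. Your observation that injectivity of $h$ is never needed is also correct and matches the paper, which states the lemma without that hypothesis.
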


\begin{proof}
Clearly $\eq(g|_{\SD(g, h)}, h|_{\SD(g, h)})\leq \eq(g, h)$. For the other direction we prove that $\eq(g, h)\leq \SD(g, h)$, which is sufficient.
So, let $x\in \eq(g, h)$. Then $x\in H_0$, while if $x\in H_i$ then $g(x)=h(x)\in g(H_i)\cap h(H_i)$, and so $x\in g^{-1}(g(h_i)\cap h(H_i))=H_{i+1}$. Therefore, by induction we have that $x\in H_i$ for all $i\geq0$, and so $x\in \cap H_i=\SD(g, h)$ as required.
\end{proof}

If $h$ is injective then we can define $\psi_{(g|_{\SD(g, h)}, h|_{\SD(g, h)})}\in \emo(\SD(g, h))$ as in Lemma \ref{lem:FixVsEq}.
Crucially, $\eq(g, h)$ is the set of fixed points of this map.

\begin{lemma}
\label{lem:EqualAsFixed}
Let $g, h: F(\Sigma)\rightarrow F(\Delta)$ be homomorphisms with $h$ injective. Then the endomorphism $\phi_{(g, h)}:=\psi_{(g|_{\SD(g, h)}, h|_{\SD(g, h)})}\in \emo(\SD(g, h))$ satisfies $\eq(g, h)=\fix(\phi_{(g, h)})$.
\end{lemma}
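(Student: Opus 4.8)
The plan is to combine the two preceding lemmas. First I would apply Lemma \ref{lem:SDandEq} to replace $\eq(g, h)$ with $\eq(g|_{\SD(g, h)}, h|_{\SD(g, h)})$; this reduces the problem to understanding the equaliser of the two restricted maps, now viewed as homomorphisms out of $K := \SD(g, h)$ rather than out of $F(\Sigma)$. The point of passing to the stable domain is precisely that it makes the hypothesis of Lemma \ref{lem:FixVsEq} available: by Lemma \ref{lem:CompareImages}, since $h$ is injective we have $g(K) \leq h(K)$, i.e. $\im(g|_K) \leq \im(h|_K)$. The restriction $h|_K$ is still injective (a restriction of an injective map), so the endomorphism $\psi_{(g|_K, h|_K)} \in \emo(K)$ is well-defined, and this is exactly the map the statement calls $\phi_{(g, h)}$.

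Next I would invoke Lemma \ref{lem:FixVsEq} with the domain group taken to be $K$ in place of $F(\Sigma)$ and with $g|_K, h|_K$ in place of $g, h$: its hypotheses ($h|_K$ injective, $\im(g|_K) \leq \im(h|_K)$) are met by the previous paragraph, so it yields $\eq(g|_K, h|_K) = \fix(\psi_{(g|_K, h|_K)}) = \fix(\phi_{(g, h)})$. Chaining this with the identity from Lemma \ref{lem:SDandEq} gives $\eq(g, h) = \fix(\phi_{(g, h)})$, as required. One small point worth spelling out is that $\fix$ of an endomorphism of $K$ is literally a subgroup of $K \leq F(\Sigma)$, and that the fixed points computed inside $K$ coincide with the points of $\eq(g,h)$ sitting inside $F(\Sigma)$ — but this is automatic since Lemma \ref{lem:SDandEq} already tells us $\eq(g, h) \leq K$, so there is no ambient-group ambiguity.

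I do not anticipate a genuine obstacle here: the lemma is essentially a bookkeeping corollary assembled from Lemmas \ref{lem:FixVsEq}, \ref{lem:CompareImages}, and \ref{lem:SDandEq}, and the only thing requiring care is making sure the "change of domain group" from $F(\Sigma)$ to $\SD(g, h)$ is applied consistently across the three lemmas — in particular, checking that Lemma \ref{lem:CompareImages} (which is stated for the maps $g, h$ on $F(\Sigma)$) is being used correctly to certify the containment $\im(g|_K) \leq \im(h|_K)$ needed to even define $\phi_{(g,h)}$. If I wanted to be maximally careful I would note that Lemma \ref{lem:CompareImages} gives $g(\SD(g,h)) \leq h(\SD(g,h))$ directly, which is the containment in question, so no re-derivation is needed.
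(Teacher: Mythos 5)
Your proof is correct and follows exactly the paper's argument: combine Lemma \ref{lem:CompareImages} (to verify the hypothesis $g(\SD(g,h))\leq h(\SD(g,h))$ of Lemma \ref{lem:FixVsEq}), apply Lemma \ref{lem:FixVsEq} to the restricted maps, and conclude via Lemma \ref{lem:SDandEq}. The extra care you take about the change of domain group is sound but, as you note, automatic.
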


\begin{proof}
By Lemma \ref{lem:CompareImages}, the maps $g|_{\SD(g, h)}$ and $h|_{\SD(g, h)}$ satisfy the conditions of Lemma \ref{lem:FixVsEq}, and so $\phi_{(g, h)}$ is an endomorphism of $\SD(g, h)$
which satisfies $\eq(g|_{\SD(g, h)}, h|_{\SD(g, h)})=\fix(\phi_{(g, h)}).$
The result then follows by Lemma \ref{lem:SDandEq}.
\end{proof}

Combining Lemma \ref{lem:EqualAsFixed} with known results on fixed subgroups of free groups, we have the following.
\begin{theorem}
\label{thm:EqRankBoundedSD}
Let $g, h: F(\Sigma)\rightarrow F(\Delta)$ be homomorphisms.
\begin{enumerate}[label=(\alph*)]
\item\label{EqRankBoundedSD:1} If $h$ is injective then $\rk(\eq(g, h))\leq\rk(\SD(g, h))$.
\item\label{EqRankBoundedSD:2} If both $g$ and $h$ are injective then $\eq(g, h)$ is inert in $\SD(g, h)$.
\end{enumerate}
\end{theorem}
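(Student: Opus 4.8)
The plan is to deduce both statements directly from Lemma \ref{lem:EqualAsFixed}, which already does the real work by exhibiting $\eq(g, h)$ as the fixed subgroup of an endomorphism $\phi_{(g, h)}$ of $\SD(g, h)$; what remains is to feed this into the known structural results on fixed subgroups, together with a short case split on whether $\SD(g, h)$ is finitely generated.

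For part \ref{EqRankBoundedSD:1}, I would argue as follows. By Lemma \ref{lem:EqualAsFixed} we have $\eq(g, h)=\fix(\phi_{(g, h)})$ with $\phi_{(g, h)}\in\emo(\SD(g, h))$. If $\SD(g, h)$ is not finitely generated then $\rk(\SD(g, h))=\infty$ and there is nothing to prove. Otherwise $\SD(g, h)$ is a finitely generated free group and $\phi_{(g, h)}$ is an endomorphism of it, so the Imrich--Turner bound (which extends Bestvina--Handel, and is subsumed by Bergman's theorem) gives $\rk(\fix(\phi_{(g, h)}))\leq\rk(\SD(g, h))$; combining, $\rk(\eq(g, h))\leq\rk(\SD(g, h))$.

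For part \ref{EqRankBoundedSD:2}, the extra input is that when both $g$ and $h$ are injective the endomorphism $\phi_{(g, h)}$ is in fact a monomorphism: it is the composite $h^{-1}\circ(g|_{\SD(g, h)})$ of the injective map $g|_{\SD(g, h)}$ with the injective map $h^{-1}$ (well defined by Lemma \ref{lem:CompareImages}). The Dicks--Ventura theorem that fixed subgroups of monomorphisms are inert then yields that $\fix(\phi_{(g, h)})=\eq(g, h)$ is inert in $\SD(g, h)$, as required.

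The main delicate point is the infinitely generated case in part \ref{EqRankBoundedSD:2}: the Dicks--Ventura inertness statement is classically phrased for finitely generated free groups, yet $\SD(g, h)$ can be infinitely generated, being a descending intersection $\cap H_i$ of the subgroups $H_i$ (each finitely generated by Howson's property when $g$ is injective) which need not stabilise. To handle this I would observe that inertness of $\eq(g, h)$ in $\SD(g, h)$ only constrains intersections with finite-rank subgroups $K\leq\SD(g, h)$, and then reduce each such intersection to the finitely generated setting --- for instance by locating a finitely generated $\phi_{(g, h)}$-invariant (or otherwise adequate) subgroup of $\SD(g, h)$ containing $K$ on which the f.g. result applies; verifying that such a reduction goes through is where the care is needed, whereas for part \ref{EqRankBoundedSD:1} the corresponding issue is vacuous because the infinite-rank case is trivial.
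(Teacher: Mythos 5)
Your proof is correct and takes essentially the same route as the paper: both parts pass through Lemma \ref{lem:EqualAsFixed} to identify $\eq(g, h)$ with $\fix(\phi_{(g, h)})$ for $\phi_{(g, h)}\in\emo(\SD(g, h))$, then apply the Imrich--Turner bound for part \ref{EqRankBoundedSD:1} and, noting that injectivity of $g$ makes $\phi_{(g, h)}$ a monomorphism, the Dicks--Ventura inertness theorem for part \ref{EqRankBoundedSD:2}. The ``delicate point'' you raise about $\SD(g, h)$ possibly being infinitely generated is a fair observation, but the paper's own proof performs no such reduction either --- it simply cites Dicks--Ventura --- so your argument matches the published one and nothing further is required.
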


\begin{proof}
Suppose $h$ is injective. Consider the map $\phi_{(g, h)}\in\emo(\SD(g, h))$ from Lemma \ref{lem:EqualAsFixed}, with $\fix(\phi_{(g, h)})=\eq(g, h)$. Then $\rk(\eq(g, h))=\rk(\fix(\phi_{(g, h)}))\leq\rk(\SD(g, h))$ \cite{Imrich1989Endomorphisms}, as required.

Suppose both $g$ and $h$ are injective. Recalling that $\phi_{(g, h)}\in\emo(\SD(g, h))$ is defined by $x\mapsto h^{-1}(g(x))$, as $g$ is injective the map $\phi_{(g, h)}$ is also injective. Hence, $\fix(\phi_{(g, h)})$ is inert in $\SD(g, h)$ \cite[Theorem IV.5.5]{Dicks1996Group}. The result follows as $\eq(g, h)=\fix(\phi_{(g, h)})$, by Lemma \ref{lem:EqualAsFixed}.
\end{proof}

In order to apply Theorem \ref{thm:EqRankBoundedSD} to the Equaliser Conjecture we would need to show that if $h$ is injective then the rank of $\SD(g, h)$ is bounded by $|\Sigma|$.
By Example \ref{ex:finiteGen}, this is not true in general.

%%%----------------------------------------------%%%
%%%-----------------Inertia---------------------%%%
%%%----------------------------------------------%%%

\section{Proof of Theorem \ref{thm:Inert}}
\label{sec:inert}
In this section we prove Theorem \ref{thm:Inert}; we do this by using the assumed conditions regarding inertness to understand $\SD(g, h)$, and then apply Theorem \ref{thm:EqRankBoundedSD}. We also extend Theorem \ref{thm:Inert}.\ref{Inert:2} to cover sets of homomorphisms, rather than just pairs.

We first record the following result of Takahasi which we use frequently below \cite{Takahasi1951chains} (see also \cite[Theorem 15.2]{Kapovich2002stallings}).
This was applied by Imrich and Turner in order to prove that $\rk(\fix(\phi))\leq|\Sigma|$ for all $\phi\in\emo(F(\Sigma))$ \cite{Imrich1989Endomorphisms}.

\begin{proposition}[Takahasi, 1951 \cite{Takahasi1951chains}]
\label{prop:Exercise}
Let $K_0\geq K_1\geq\ldots$ be a nested sequence of free groups, and write $K:=\cap_{i=0}^{\infty}K_i$. If there exists some $n\in\mathbb{N}$ such that $\rk(K_i)\leq n$ for all $i\geq0$ then $K$ is a free factor of all but finitely many of the $K_i$. In particular, $\rk(K)\leq n$.
\end{proposition}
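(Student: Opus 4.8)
The plan is to exploit the bound on the ranks to find, among the nested groups $K_0 \geq K_1 \geq \cdots$, an index beyond which the ranks stabilise, and then show that once the ranks have stabilised the intersection $K$ is already a free factor of each $K_i$. First I would observe that the sequence of integers $\rk(K_i)$ is not necessarily monotone (passing to a subgroup can raise or lower the rank of a free group), so the bound $\rk(K_i) \leq n$ is genuinely doing work: it forces $\liminf_i \rk(K_i)$ to be a finite number, say $r$, attained infinitely often. I would then pick an index $m$ with $\rk(K_m) = r$ minimal among all values $\{\rk(K_i)\}$ — in fact it is cleaner to choose $m$ so that $\rk(K_m)$ equals $\min_i \rk(K_i)$, which exists since the ranks are bounded below by $0$.

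The key step is then: for every $i \geq m$, the subgroup $K_i$ has rank at least $\rk(K_m)$ by minimality, but $K_i \leq K_m$, so I want to deduce that $K_i$ is a free factor of $K_m$. This is where I would invoke the standard fact (a consequence of the Nielsen--Schreier theorem via Stallings-graph / core-graph immersions, or Takahasi's original combinatorial argument) that a finitely generated subgroup of a free group which is contained in another finitely generated subgroup of the same rank must in fact equal it — wait, that is false in general; the correct statement I need is the weaker one that among a descending chain of subgroups of a free group all of the same rank, each is a free factor of the previous, or more precisely that if $A \leq B$ are free with $\rk(A) = \rk(B) < \infty$ then $A$ is a free factor of $B$ (indeed $A = B$). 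Let me restate the actual mechanism: for $i \geq m$ we have $K_m \geq K_i \geq K_{i+1} \geq \cdots$ with all ranks $\geq \rk(K_m)$; combining with a bound requires the genuinely nontrivial input, which is that a free factor chain must terminate — so the honest route is to cite that in a free group of rank $n$, a properly descending chain of free factors has length at most $n$, and to show each $K_i$ for large $i$ sits as a free factor inside $K_m$ using the fact that $K \leq K_i \leq K_m$ together with the core-graph characterisation of free factors.

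Concretely, the argument I would write runs: after replacing the chain by its tail past index $m$, assume $\rk(K_i) = r$ for all $i$ (the values are bounded by $n$ and nonincreasing once we are past the point where the minimum $r$ is first attained and... ) — here the \emph{main obstacle} is precisely justifying that past some index the ranks are constant and that constancy upgrades inclusion to free-factor inclusion. I expect to handle this by the following clean lemma: if $A \leq B$ are subgroups of a free group with $B$ finitely generated and $\rk(A) \leq \rk(B)$, and $A$ is a free factor of $B$, good; in general one uses that in the Stallings core graph $\Gamma_B$ the graph $\Gamma_A$ immerses, and $A$ is a free factor of $B$ iff this immersion extends to a graph with the same first Betti number, which happens automatically when one runs the Stallings folding process and the Euler characteristics match up. Granting the lemma, the chain $K_m \geq K_{m+1} \geq \cdots$ becomes a descending chain of free factors of $K_m$; such a chain of free factors in a group of rank $r$ must stabilise after at most $r$ steps (each proper free-factor inclusion strictly drops the rank, contradicting constancy), so there is $N$ with $K_i = K_N$ for all $i \geq N$, whence $K = K_N$, which is a free factor of every $K_i$ with $i \geq N$ and has rank $\rk(K_N) \leq n$. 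Finally $K$ is a free factor of all $K_i$ for $i \geq N$, i.e. of all but finitely many, giving $\rk(K) \leq \rk(K_i) \leq n$ as claimed.
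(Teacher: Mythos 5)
There is a genuine gap, and in fact the conclusion you reach is false. Your argument ends with ``there is $N$ with $K_i=K_N$ for all $i\geq N$, whence $K=K_N$'', i.e.\ you prove the chain stabilises. It need not: take $K_i=\langle a, b^{2^i}\rangle\leq F(a,b)$. This chain is nested, every term has rank $2$, it is strictly decreasing forever, and $K=\bigcap_i K_i=\langle a\rangle$ has rank $1$. This is exactly why the proposition is phrased as ``$K$ is a free factor of all but finitely many of the $K_i$'' rather than ``$K$ equals all but finitely many of the $K_i$''. The same example kills the lemma you propose to ``grant'': with $m$ chosen so that $\rk(K_m)$ is minimal (here $m=0$, $r=2$), it is simply not true that each $K_i$ with $i\geq m$ is a free factor of $K_m$ --- $\langle a,b^2\rangle$ is not a free factor of $F(a,b)$, since a proper free factor of a free group of rank $2$ has rank at most $1$. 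Relatedly, the intermediate assertion ``if $A\leq B$ are free with $\rk(A)=\rk(B)<\infty$ then $A$ is a free factor of $B$ (indeed $A=B$)'' is false ($\langle a^2\rangle\leq\langle a\rangle$); equal rank forces equality only once you already know $A$ is a free factor of $B$, so the argument is circular at the point where it matters. The Euler-characteristic remark does not rescue this: $\langle a,b^2\rangle$ and $F(a,b)$ have the same Betti number.

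The missing idea is a genuine finiteness input that your rank bookkeeping does not supply. The standard route (Takahasi's original argument, or its modern formulation via Stallings graphs and algebraic extensions, as in the Kapovich--Miasnikov reference the paper cites) is: every finitely generated $L\leq K_0$ has only finitely many algebraic extensions in $K_0$, and every intermediate subgroup $L\leq M\leq K_0$ contains one of them as a free factor of $M$. Applying this to finitely generated subgroups $L$ of $K$, using that a free factor of $K_i$ contained in $K_j$ ($j\geq i$) is a free factor of $K_j$, and then maximising the rank of the resulting algebraic extensions (which is bounded by $n$ since they are free factors of the $K_i$), one identifies $K$ itself as a free factor of cofinitely many $K_i$. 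Note also that the paper does not prove this proposition --- it imports it as a known theorem --- so if you want to use it you should either cite it as the paper does or reproduce a correct proof along the lines above; the proof you have written establishes a stabilisation statement that is strictly stronger than the proposition and is false.
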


If $g$ is injective then the subgroups $H_i$ from the definition of the stable domain, so where $H_0=F(\Sigma)$ and $H_{i+1}=g^{-1}(g(H_i)\cap h(H_i))$, form a nested sequence of free groups $H_0\geq H_1\geq\ldots$ and so Proposition \ref{prop:Exercise} is applicable. Our first lemma corresponds to Theorem \ref{thm:Inert}.\ref{Inert:1}.

\begin{lemma}
\label{lem:Inert:1}
Let $g, h: F(\Sigma)\rightarrow F(\Delta)$ be injective homomorphisms. If $\im(g)$ is an inert subgroup of $\langle \im(g)\cup\im(h)\rangle$ then $\rk(\eq(g, h))\leq\rk(\SD(g, h))\leq|\Sigma|$.
\end{lemma}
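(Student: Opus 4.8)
The plan is to exploit Proposition \ref{prop:Exercise} (Takahasi) applied to the nested chain $H_0 \geq H_1 \geq \cdots$ whose intersection is $\SD(g,h)$. Since $g$ is injective, $\rk(g(H_i)) = \rk(H_i)$, so it suffices to show that $\rk(H_i) \leq |\Sigma|$ for all $i$; then Takahasi gives $\rk(\SD(g,h)) \leq |\Sigma|$, and Theorem \ref{thm:EqRankBoundedSD}\ref{EqRankBoundedSD:1} gives $\rk(\eq(g,h)) \leq \rk(\SD(g,h)) \leq |\Sigma|$, which is the claim. So the whole proof reduces to a uniform rank bound on the $H_i$.

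First I would set $G := \langle \im(g) \cup \im(h) \rangle$ and observe that by hypothesis $\im(g)$ is inert in $G$. The key point is to track the images $g(H_i)$ inside $G$ rather than the $H_i$ themselves. Note $g(H_0) = \im(g)$ has rank $|\Sigma|$. For the inductive step, suppose $\rk(g(H_i)) \leq |\Sigma|$. By definition $H_{i+1} = g^{-1}(g(H_i) \cap h(H_i))$, and since $g$ is injective, $\rk(H_{i+1}) = \rk(g(H_{i+1})) = \rk(g(H_i) \cap h(H_i))$ — here one needs that $g(H_{i+1}) = g(H_i) \cap h(H_i)$, which holds because $g(H_i) \cap h(H_i) \leq \im(g)$ so every element of it is $g$ of something, and that something lies in $g^{-1}(g(H_i)) = H_i$ (using injectivity of $g$ and $H_i \leq F(\Sigma)$). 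Now $h(H_i) \leq \im(h) \leq G$, and $g(H_i) \leq \im(g) \leq G$, so $g(H_i) \cap h(H_i)$ is the intersection of a subgroup of $G$ with $\im(g)$. Write $g(H_i) \cap h(H_i) = (g(H_i) \cap h(H_i)) \cap \im(g)$, and apply inertness of $\im(g)$ in $G$ with $K := g(H_i) \cap h(H_i) \leq G$: this yields
\[
\rk\big(\im(g) \cap (g(H_i) \cap h(H_i))\big) \leq \rk\big(g(H_i) \cap h(H_i)\big).
\]
That is circular as stated, so instead I would apply inertness the other way: take $K := h(H_i) \leq G$ (noting $\rk(h(H_i)) \leq \rk(H_i) \leq |\Sigma|$ since the sequence is nested and starts at rank $|\Sigma|$, using that subgroups of free groups of rank $\leq |\Sigma|$ have rank $\leq |\Sigma|$), so that $\rk(\im(g) \cap K) \leq \rk(K) \leq |\Sigma|$. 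Since $g(H_i) \cap h(H_i) \leq \im(g) \cap h(H_i) = \im(g) \cap K$, we get $\rk(g(H_{i+1})) = \rk(g(H_i) \cap h(H_i)) \leq \rk(\im(g) \cap K) \leq |\Sigma|$, completing the induction. Actually the cleanest route avoids the induction on $g(H_i)$ entirely: just observe directly that for every $i \geq 1$, $g(H_i) \leq \im(g) \cap h(H_{i-1})$, and $\rk(h(H_{i-1})) \leq \rk(H_{i-1}) \leq \rk(H_0) = |\Sigma|$ because the $H_j$ are nested; then inertness of $\im(g)$ in $G$ gives $\rk(g(H_i)) \leq \rk(\im(g) \cap h(H_{i-1})) \leq \rk(h(H_{i-1})) \leq |\Sigma|$, hence $\rk(H_i) = \rk(g(H_i)) \leq |\Sigma|$ for all $i \geq 1$, and also $\rk(H_0) = |\Sigma|$.

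With the uniform bound $\rk(H_i) \leq |\Sigma|$ in hand, Proposition \ref{prop:Exercise} applies to the nested sequence $H_0 \geq H_1 \geq \cdots$ with $n = |\Sigma|$, yielding $\rk(\SD(g,h)) = \rk(\cap_i H_i) \leq |\Sigma|$. Then Theorem \ref{thm:EqRankBoundedSD}\ref{EqRankBoundedSD:1}, applicable since $h$ is injective, gives $\rk(\eq(g,h)) \leq \rk(\SD(g,h)) \leq |\Sigma|$, as required. The main obstacle — really the only subtlety — is the bookkeeping identity $g(H_{i+1}) = g(H_i) \cap h(H_i)$ together with correctly orienting the inertness inequality so that it bounds $\rk(g(H_i))$ by something already known (namely $\rk(h(H_{i-1})) \leq |\Sigma|$) rather than by something we are trying to bound; once one commits to feeding $h(H_{i-1})$, a subgroup of visibly bounded rank, as the test subgroup $K$ in the definition of inertness of $\im(g)$ in $G$, the argument closes immediately.
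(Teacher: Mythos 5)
Your overall skeleton (a uniform rank bound on the $H_i$, then Takahasi's Proposition \ref{prop:Exercise}, then Theorem \ref{thm:EqRankBoundedSD}.\ref{EqRankBoundedSD:1}) is exactly the paper's, but the step that actually produces the uniform bound is broken in two places, and both breaks stem from the same misconception about ranks of subgroups of free groups. First, you assert twice that $\rk(H_i)\leq\rk(H_0)=|\Sigma|$ ``because the $H_j$ are nested'', i.e.\ that a subgroup of a free group of rank $\leq|\Sigma|$ has rank $\leq|\Sigma|$. This is false: by Nielsen--Schreier an index-$n$ subgroup of $F_k$ has rank $n(k-1)+1$, and infinite-index subgroups can have infinite rank (e.g.\ the commutator subgroup of $F_2$). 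If this claim were true the lemma would be trivial and the inertness hypothesis irrelevant; the whole point is that the bound $\rk(H_i)\leq|\Sigma|$ must be \emph{proved} by induction, with inertness doing the work at each step. Second, in both versions of your argument you pass from a containment $g(H_{i+1})\leq\im(g)\cap h(H_i)$ to the rank inequality $\rk(g(H_{i+1}))\leq\rk(\im(g)\cap h(H_i))$. That inference is invalid in free groups for the same reason: a subgroup can have strictly larger rank than an overgroup.

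Both gaps are repaired by the identity the paper proves first, namely $g(H_{i+1})=\im(g)\cap h(H_i)$ (an equality, not just a containment), established by induction on $i$ using $h(H_i)\leq h(H_{i-1})$: from the inductive hypothesis $g(H_i)=\im(g)\cap h(H_{i-1})$ one gets $g(H_{i+1})=g(H_i)\cap h(H_i)=\im(g)\cap h(H_{i-1})\cap h(H_i)=\im(g)\cap h(H_i)$. With this equality in hand, one runs a second induction on the rank: assuming $\rk(H_i)\leq|\Sigma|$, inertness of $\im(g)$ in $\langle\im(g)\cup\im(h)\rangle$ applied to $K=h(H_i)$ gives $\rk(H_{i+1})=\rk(g(H_{i+1}))=\rk(\im(g)\cap h(H_i))\leq\rk(h(H_i))\leq\rk(H_i)\leq|\Sigma|$. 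You correctly identified that $h(H_i)$ is the subgroup to feed into the inertness hypothesis, and you correctly derived $g(H_{i+1})=g(H_i)\cap h(H_i)$; but without upgrading the containment to the equality above, and without structuring the rank bound itself as an induction rather than appealing to nestedness, the argument does not close.
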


\begin{proof}
We first prove that for all $i\geq0$ we have $g(H_{i+1})=\im(g)\cap h(H_i)$, with $H_i$ the subgroups in the definition of the stable domain. As $\im(g)=g(H_0)$, this holds for $i=0$. If the result holds for $i$ then we have the following, with the last line obtained as $H_i\leq H_{i-1}$ (as $g$ is injective) so $h(H_i)\leq h(H_{i-1})$:
\begin{align*}
g(H_{i+1})
&=g(H_i)\cap h(H_i)\\
&=\im(g)\cap h(H_{i-1})\cap h(H_i)\\
&=\im(g)\cap h(H_i)
\end{align*}
Therefore, by induction we have that $g(H_{i+1})=\im(g)\cap h(H_i)$. It then follows by induction, and applying the fact that $g$ is injective and $\im(g)$ is inert, that $\rk(H_i)\leq |\Sigma|$ for all $i\geq0$.
Then the sequence
\[
H_0\geq H_1\geq H_2\geq\ldots
\]
is nested, and by the above each term has rank at most $|\Sigma|$; it follows from Proposition \ref{prop:Exercise} that $\rk(\SD(g, h))=\rk\left(\cap_{i=0}^{\infty}H_i\right)\leq |\Sigma|$. The bound on $\rk(\eq(g, h))$ then follows from Theorem \ref{thm:EqRankBoundedSD}.\ref{EqRankBoundedSD:1}.
\end{proof}

Theorem \ref{thm:Inert}.\ref{Inert:2} uses the fact that inertness is stable under intersections.
This was first proven by Dicks--Ventura using the language of $F$-sets \cite[Corollary I.4.13]{Dicks1996Group}.
For the sake of completeness, we give a low-level proof now.

\begin{lemma}
\label{lem:InertiaIntersect}
Let $\{A_i\}_I$, $I\subset\mathbb{N}$, be a set of inert subgroups of a free group $F(\Sigma)$. Then $\cap A_i$ is inert.
\end{lemma}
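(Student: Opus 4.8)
The statement to prove is: if $\{A_i\}_{i \in I}$ with $I \subseteq \mathbb{N}$ is a family of inert subgroups of $F(\Sigma)$, then $\bigcap_{i} A_i$ is inert. The key reduction is that inertness is a statement about ranks of intersections, and we already have at hand a tool — Takahasi's result, Proposition \ref{prop:Exercise} — for controlling the rank of a countable nested intersection. So the first move is to pass from the arbitrary (countable) family to a nested sequence. Set $B_0 := A_0$ and inductively $B_{n+1} := B_n \cap A_{n+1}$ (with the obvious truncation if $I$ is finite). Then $B_0 \geq B_1 \geq B_2 \geq \cdots$ is nested and $\bigcap_n B_n = \bigcap_i A_i$. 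This reduces the whole problem to the finite case — namely, to showing the intersection of two inert subgroups is inert — plus one application of Takahasi.

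\textbf{Step 1: the two-subgroup case.} Show that if $A$ and $B$ are inert in $F(\Sigma)$, then $A \cap B$ is inert. Let $K \leq F(\Sigma)$ be arbitrary. We want $\rk((A \cap B) \cap K) \leq \rk(K)$. Write $(A \cap B) \cap K = A \cap (B \cap K)$. Since $B$ is inert, $\rk(B \cap K) \leq \rk(K)$. Since $A$ is inert, applying inertness of $A$ with the subgroup $B \cap K$ in the role of "$K$" gives $\rk(A \cap (B \cap K)) \leq \rk(B \cap K) \leq \rk(K)$. So $A \cap B$ is inert. This step is genuinely easy — it is just "unnesting" the intersection and using each hypothesis once.

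\textbf{Step 2: the countable case via Takahasi.} With $B_n$ as above, each $B_n$ is inert by Step 1 and an obvious induction. Now fix an arbitrary $K \leq F(\Sigma)$; we must bound $\rk\big((\bigcap_n B_n) \cap K\big)$. Intersecting the nested sequence with $K$ gives a nested sequence $B_0 \cap K \geq B_1 \cap K \geq \cdots$ whose intersection is $(\bigcap_n B_n) \cap K$. By inertness of each $B_n$, we have $\rk(B_n \cap K) \leq \rk(K)$ for all $n$; in particular the ranks are uniformly bounded. If $\rk(K)$ is finite this is immediate; one should note that if $\rk(K) = \infty$ there is nothing to prove, and also handle the degenerate case where $K$ itself need not be finitely generated by restricting to finitely generated $K$ first (it suffices to check inertness against finitely generated subgroups, since the rank of an infinite-rank group is not exceeded by anything). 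With the uniform bound in hand, Proposition \ref{prop:Exercise} applies to the sequence $\{B_n \cap K\}$ and yields $\rk\big((\bigcap_n B_n) \cap K\big) \leq \rk(K)$, which is exactly inertness of $\bigcap_i A_i$.

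\textbf{Main obstacle.} There is no serious obstacle; the proof is short and the ingredients are all in place. The one point requiring a little care is the bookkeeping around non-finitely-generated or infinite-rank $K$ — one should observe up front that it is enough to verify the inertness inequality for finitely generated $K$ (so that $\rk(K) < \infty$ and Takahasi applies cleanly), since for $K$ of infinite rank the inequality $\rk((\bigcap A_i) \cap K) \leq \rk(K) = \infty$ is vacuous. The only other thing to get right is the reduction of an arbitrary countable index set to a genuine nested sequence, which is the standard "running intersections" trick indicated above.
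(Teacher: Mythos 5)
Your proof is correct and takes essentially the same route as the paper's: first the two-subgroup case by unnesting the intersection and applying each inertness hypothesis once, then forming the running intersections $B_n$ and applying Takahasi's result (Proposition \ref{prop:Exercise}) to the nested sequence $B_n \cap K$. Your explicit remark that one may restrict to finitely generated $K$ (so that the uniform rank bound required by Takahasi is genuinely finite) is a point the paper leaves implicit, but it does not change the argument.
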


\begin{proof}
Suppose $A$ and $B$ are inert, and let $K\leq F(\Sigma)$ be arbitrary.
Then $\rk(A\cap B\cap K)\leq\rk(B\cap K)$ as $A$ is inert, while $\rk(B\cap K)\leq\rk(K)$ as $B$ is inert. Hence, $A\cap B$ is inert and so the result holds for finite sets $\{A_i\}_{I}$.

Suppose $\{A_i\}_I$ is a countable set of inert subgroups, and let $K$ be an arbitrary subgroup. We may suppose $I=\mathbb{N}$, and so define $B_n:=\cap_{i=0}^n A_i$. Then $B_n$ is inert, by the above.
Now, the sequence
\[
(B_0\cap K)\geq (B_1\cap K)\geq (B_2\cap K)\geq\ldots
\]
is nested, and by inertness each term has rank at most $\rk(K)$; it follows from Proposition \ref{prop:Exercise} that $\rk\left(\cap_{i=0}^{\infty}(B_i\cap K)\right)\leq \rk(K)$. The result then follows as $\cap_{i=0}^{\infty}(B_i\cap K)=\left(\cap_{i=0}^{\infty}A_i\right)\cap K$.
\end{proof}

The following lemma corresponds to Theorem \ref{thm:Inert}.\ref{Inert:2}. The proof uses the easy fact that inertness is transitive: if we have a chain of subgroups $A< B< C$ with $A$ inert in $B$ and $B$ inert in $C$ then $A$ is inert in $C$.

\begin{lemma}
\label{lem:Inert:2}
Let $g, h: F(\Sigma)\rightarrow F(\Delta)$ be injective homomorphisms.
If both $\im(g)$ and $\im(h)$ are inert subgroups of $\langle \im(g)\cup\im(h)\rangle$ then $\eq(g, h)$ is an inert subgroup of $F(\Sigma)$.
\end{lemma}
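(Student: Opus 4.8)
The plan is to reduce the inertness of $\eq(g,h)$ in $F(\Sigma)$ to the inertness of $\fix(\phi_{(g,h)})$ in $\SD(g,h)$ via Theorem \ref{thm:EqRankBoundedSD}.\ref{EqRankBoundedSD:2}, and then to prove that $\SD(g,h)$ is itself inert in $F(\Sigma)$. Once both facts are in hand, transitivity of inertness (stated just before the lemma) gives that $\eq(g,h)=\fix(\phi_{(g,h)})$ is inert in $F(\Sigma)$, as desired. So the whole proof hinges on the single new claim: under the hypothesis that $\im(g)$ and $\im(h)$ are both inert in $\langle\im(g)\cup\im(h)\rangle$, the stable domain $\SD(g,h)$ is inert in $F(\Sigma)$.

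To prove $\SD(g,h)$ is inert, I would run the same inductive scheme as in the proof of Lemma \ref{lem:Inert:1}, but now tracking intersections with an arbitrary test subgroup $K\leq F(\Sigma)$. Recall $H_0=F(\Sigma)$ and $H_{i+1}=g^{-1}(g(H_i)\cap h(H_i))$, and (as established in Lemma \ref{lem:Inert:1}) that $g(H_{i+1})=\im(g)\cap h(H_i)$. Fix $K$. I claim $\rk(H_i\cap K)\leq\rk(K)$ for all $i$, by induction: the base case is trivial since $H_0\cap K=K$. For the inductive step, apply $g$ (injective, so rank-preserving) to get $g(H_{i+1}\cap K)=g(H_{i+1})\cap g(K)=\im(g)\cap h(H_i)\cap g(K)$. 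Now I want to bound the rank of this group inside $\langle\im(g)\cup\im(h)\rangle$: since $h(H_i)$ is a subgroup of $\im(h)$, and $\im(h)$ is inert in $\langle\im(g)\cup\im(h)\rangle$, and $\im(g)\cap g(K)$ is a subgroup of $\im(g)$ which is inert in $\langle\im(g)\cup\im(h)\rangle$ — one uses inertness of $\im(g)$ against the subgroup $h(H_i)$ (so $\rk(\im(g)\cap h(H_i))\leq\rk(h(H_i))$, but that is not quite the bound I want) — so the bookkeeping needs care. The cleaner route is: inertness of $\im(h)$ applied to the subgroup $g(H_{i+1})=\im(g)\cap h(H_i)$ of $\langle\im(g)\cup\im(h)\rangle$ does nothing directly; instead intersect with $g(K)$ and use inertness of $\im(g)$ (against $h(H_{i-1})\cap g(K)$, say) combined with the already-known $\rk(H_i\cap K)\leq\rk(K)$, i.e. $\rk(h(H_i)\cap\text{(something)})$. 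I would set this up so that the inductive hypothesis $\rk(H_i\cap K)\leq\rk(K)$, translated through $g$ and $h$, feeds directly into one application of inertness of $\im(g)$ and one of $\im(h)$, exactly mirroring how Lemma \ref{lem:Inert:1} used a single inertness application per step to get $\rk(H_i)\leq|\Sigma|$. Having obtained $\rk(H_i\cap K)\leq\rk(K)$ for all $i$, the nested sequence $(H_0\cap K)\geq(H_1\cap K)\geq\cdots$ has bounded rank, so Proposition \ref{prop:Exercise} gives $\rk\bigl(\cap_i(H_i\cap K)\bigr)=\rk(\SD(g,h)\cap K)\leq\rk(K)$; since $K$ was arbitrary, $\SD(g,h)$ is inert in $F(\Sigma)$.

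The main obstacle I anticipate is getting the inductive step to actually close with the two inertness hypotheses available. The subtlety is that $h$ restricted to $H_i$ need not have inert image in $\langle\im(g)\cup\im(h)\rangle$ — only $\im(h)=h(F(\Sigma))$ is assumed inert — so when I write $g(H_{i+1}\cap K)=\im(g)\cap h(H_i)\cap g(K)$, I must be careful to apply inertness only to the \emph{full} images $\im(g)$ and $\im(h)$, using the inductive hypothesis (rephrased via $g$ and $h$, which are rank-preserving) to control the ranks of the proper subgroups that appear. Concretely I expect to write $h(H_i)=h(g^{-1}(\im(g)\cap h(H_{i-1})))$ and chase the identity $g(H_{i+1}\cap K)=\im(g)\cap h(H_i\cap h^{-1}(g(K)))$ or similar, arranging the expression as (an inert subgroup of $\langle\im(g)\cup\im(h)\rangle$) intersected with (a subgroup whose rank is already controlled), so that one inertness application suffices. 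Provided this translation is done carefully — and it is essentially forced, since it is the natural $K$-relativisation of the Lemma \ref{lem:Inert:1} argument — the rest is routine, and Proposition \ref{prop:Exercise} plus transitivity finishes the proof.

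\begin{proof}[Proof sketch]
By Theorem \ref{thm:EqRankBoundedSD}.\ref{EqRankBoundedSD:2}, $\eq(g, h)$ is inert in $\SD(g, h)$. By transitivity of inertness, it therefore suffices to show that $\SD(g, h)$ is inert in $F(\Sigma)$. Let $K\leq F(\Sigma)$ be arbitrary. As in the proof of Lemma \ref{lem:Inert:1}, write $H_0=F(\Sigma)$, $H_{i+1}=g^{-1}(g(H_i)\cap h(H_i))$, and recall $g(H_{i+1})=\im(g)\cap h(H_i)$. By induction on $i$, using that $g$ and $h$ are injective (hence rank-preserving) and that $\im(g)$ and $\im(h)$ are inert in $\langle \im(g)\cup\im(h)\rangle$, one shows $\rk(H_i\cap K)\leq\rk(K)$ for all $i\geq0$. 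The sequence $(H_0\cap K)\geq(H_1\cap K)\geq\cdots$ is then nested with ranks bounded by $\rk(K)$, so Proposition \ref{prop:Exercise} gives $\rk(\SD(g, h)\cap K)=\rk\bigl(\cap_i(H_i\cap K)\bigr)\leq\rk(K)$. As $K$ was arbitrary, $\SD(g, h)$ is inert in $F(\Sigma)$, and the result follows.
\end{proof}
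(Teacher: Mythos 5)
Your overall strategy coincides with the paper's: reduce, via Theorem \ref{thm:EqRankBoundedSD}.\ref{EqRankBoundedSD:2} and transitivity of inertness, to showing that $\SD(g,h)$ is inert in $F(\Sigma)$, and establish that by induction on the subgroups $H_i$. The reduction is correct. The gap is in the order of quantifiers in your induction. You fix a test subgroup $K$ at the outset and try to propagate the single inequality $\rk(H_i\cap K)\leq\rk(K)$ from $i$ to $i+1$. But the inductive step at $K$ does not consume the hypothesis at $K$ alone: writing $g(H_{i+1}\cap K)=\im(g)\cap h(H_i)\cap g(K)=h(H_i)\cap g(K)=h\bigl(H_i\cap h^{-1}(g(K)\cap\im(h))\bigr)$, you must bound $\rk(H_i\cap L)$ for the \emph{different} subgroup $L=h^{-1}(g(K)\cap\im(h))$, and only afterwards does inertness of $\im(h)$ (applied to the test subgroup $g(K)$) give $\rk(L)\leq\rk(K)$. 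So the invariant that actually propagates is ``$\rk(H_i\cap K')\leq\rk(K')$ for \emph{every} $K'\leq F(\Sigma)$'', i.e.\ ``$H_i$ is inert in $F(\Sigma)$'', not the statement for one fixed $K$. As written your induction does not close; you sensed this (``the bookkeeping needs care'') and wrote down essentially the right identity, but the final sketch still asserts the fixed-$K$ version.

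The repair is exactly what the paper does: take the inductive hypothesis to be that $H_i$ is inert in $F(\Sigma)$. Then $g(H_i)$ is inert in $\im(g)$, hence by transitivity and the hypothesis on $\im(g)$ it is inert in $\langle\im(g)\cup\im(h)\rangle$; likewise $h(H_i)$; their intersection is inert in $\langle\im(g)\cup\im(h)\rangle$ by Lemma \ref{lem:InertiaIntersect}, hence inert in $\im(g)$; and pulling back along the isomorphism $g^{-1}:\im(g)\rightarrow F(\Sigma)$ makes $H_{i+1}$ inert in $F(\Sigma)$. This packaging replaces all the rank-chasing with transitivity plus closure under intersection. Your final step --- applying Proposition \ref{prop:Exercise} to the nested chain $(H_i\cap K)$ --- is fine and is precisely the content of Lemma \ref{lem:InertiaIntersect} for the countable family $\{H_i\}$, so once the inductive invariant is strengthened your argument becomes the paper's.
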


\begin{proof}
We shall write $J_{g, h}:=\langle \im(g)\cup\im(h)\rangle$. By Theorem \ref{thm:EqRankBoundedSD}.\ref{EqRankBoundedSD:2} and the transitivity of inertness, it is sufficient to prove that $\SD(g, h)$ is inert in $F(\Sigma)$. To do this we consider the subgroups $H_i$ from the definition of the stable domain.
Note that $H_0=F(\Sigma)$ is inert in $F(\Sigma)$.
Suppose $H_i$ is inert in $F(\Sigma)$. Then $g(H_i)$ is inert in $\im(g)$ which is inert in $J_{g, h}$, and so by transitivity we have that $g(H_i)$ is inert in $J_{g, h}$. Similarly, $h(H_i)$ is inert in $J_{g, h}$. Hence, by Lemma \ref{lem:InertiaIntersect}, $g(H_i)\cap h(H_i)$ is inert in $J_{g, h}$ and so is inert in $\im(g)\leq J_{g, h}$. As $g$ is injective its inverse $g^{-1}: \im(g)\rightarrow F(\Sigma)$ is an isomorphism and so $H_{i+1}:=g^{-1}(g(H_i)\cap h(H_i))$ is inert in $F(\Sigma)$.
It follows by induction that $H_i$ is inert in $F(\Sigma)$ for all $i\geq0$. Hence, $\SD(g, h)$ is inert in $F(\Sigma)$ by Lemma \ref{lem:InertiaIntersect}.
The result follows.
\end{proof}

We now prove Theorem \ref{thm:Inert}.

\begin{proof}[Proof of Theorem \ref{thm:Inert}]
This follows immediately from Lemmas \ref{lem:Inert:1} and \ref{lem:Inert:2}.
\end{proof}

Using Lemma \ref{lem:InertiaIntersect} we can generalise Theorem \ref{thm:Inert}.\ref{Inert:2} as follows. For a set $S: F(\Sigma)\rightarrow F(\Delta)$ of homomorphisms, define $\Gamma_S$ to be the graph with vertex set $S$, with an edge connecting $g, h\in S$ if $\im(g)$ and $\im(h)$ are inert in $\langle\im(g)\cup\im(h)\rangle$.

\begin{corollary}
\label{thm:InertSETS}
Let $S: F(\Sigma)\rightarrow F(\Delta)$ be a set of injective homomorphisms such that the graph $\Gamma_S$ is connected. Then $\eq(S)$ is an inert subgroup of $F(\Sigma)$.
\end{corollary}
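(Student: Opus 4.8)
The plan is to reduce the ``all-pairs'' intersection defining $\eq(S)$ to an intersection over just the \emph{edges} of $\Gamma_S$, apply Lemma \ref{lem:Inert:2} to each such equaliser, and finish with the closure of inertness under intersections (Lemma \ref{lem:InertiaIntersect}). Connectivity of $\Gamma_S$ is precisely what licenses the first reduction.

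First I would prove that $\eq(S)=\bigcap_{e\in E(\Gamma_S)}\eq(e)$, writing $\eq(e):=\eq(g,h)$ for an edge $e=\{g,h\}$. The inclusion $\subseteq$ is immediate since each edge is a pair from $S$. For $\supseteq$, let $x$ lie in $\eq(e)$ for every edge $e$, and let $g,h\in S$ be arbitrary; connectivity gives a finite path $g=v_0,v_1,\ldots,v_k=h$ in $\Gamma_S$, and since each $\{v_i,v_{i+1}\}$ is an edge we get $v_0(x)=v_1(x)=\cdots=v_k(x)$, that is $g(x)=h(x)$, so $x\in\eq(S)$. Second, each edge $e=\{g,h\}$ satisfies the hypotheses of Lemma \ref{lem:Inert:2}: $g$ and $h$ are injective (being in $S$), and $\im(g)$, $\im(h)$ are inert in $\langle\im(g)\cup\im(h)\rangle$ by the definition of $\Gamma_S$. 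Hence $\eq(e)$ is inert in $F(\Sigma)$ for every edge $e$.

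It remains to deduce that $\bigcap_e\eq(e)=\eq(S)$ is inert. This is Lemma \ref{lem:InertiaIntersect}, modulo the fact that that lemma is stated for countable families whereas $\Gamma_S$ may have uncountably many edges; this gap --- the one point that needs care --- is closed as follows when $F(\Sigma)$ is countable. For each $x\in F(\Sigma)\setminus\eq(S)$ the first step lets us pick an edge $e_x$ with $x\notin\eq(e_x)$; then $\{\eq(e_x)\}_x$ is a countable family of subgroups, each inert by the second step, with $\bigcap_x\eq(e_x)=\eq(S)$, so Lemma \ref{lem:InertiaIntersect} gives the claim. (For arbitrary $F(\Sigma)$ one instead extends Lemma \ref{lem:InertiaIntersect} to families of any cardinality by transfinite induction, applying Proposition \ref{prop:Exercise} at limit stages: its free-factor conclusion forces the ranks of the partial intersections to decrease strictly at each limit stage, so only finitely many limit stages occur and the process terminates.) All told I expect no serious difficulty: the substance is the edge reduction together with Lemmas \ref{lem:Inert:2} and \ref{lem:InertiaIntersect}, and the cardinality issue is routine bookkeeping.
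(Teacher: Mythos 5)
Your proof is correct and follows essentially the same route as the paper: reduce $\eq(S)$ to the intersection of the edge-equalisers using connectivity of $\Gamma_S$, apply Lemma \ref{lem:Inert:2} (i.e.\ Theorem \ref{thm:Inert}.\ref{Inert:2}) to each edge, and conclude with Lemma \ref{lem:InertiaIntersect}. Your extra care over the cardinality of the family being intersected is a point the paper's one-line proof glosses over, and your reduction to a countable subfamily via witnesses handles it cleanly.
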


\begin{proof}
As $\Gamma_S$ is connected, $\eq(S)$ is the intersection of those equalisers $\eq(g, h)$ such that there is a edge connecting $g$ and $h$. By Theorem \ref{thm:Inert}.\ref{Inert:2}, each such equaliser is inert, and so $\eq(S)$ is inert by Lemma \ref{lem:InertiaIntersect}.
\end{proof}

%%%----------------------------------------------%%%
%%%------------Rank two case----------------%%%
%%%----------------------------------------------%%%

\section{The free group of rank two}
\label{sec:F2}

We are now able to prove Theorem \ref{thm:rankSETS}, which describes the rank of $\eq(S)$ for $S: F(a, b)\rightarrow F(\Delta)$ a set of homomorphisms.

\begin{proof}[Proof of Theorem \ref{thm:rankSETS}]
For part \ref{rankSETS:injONLY}, suppose every element of $S$ is injective. Two-generated subgroups of free groups are inert, by the Hanna Neumann inequality, and so the conditions of Theorem \ref{thm:Inert}.\ref{Inert:2} are satisfied for all $g, h\in S$. Therefore, for all $g, h\in S$ we have that $\eq(g,h)$ is inert in $F(a, b)$. Then, as $S$ is necessarily countable, Lemma \ref{lem:InertiaIntersect} gives us that $\eq(S)$ is inert in $F(a, b)$. Hence, $\rk(\eq(S))\leq2$ as required.

For part \ref{rankSETS:injANDnoninj}, suppose $S$ contains both injective and non-injective maps. Let $g\in S$ be injective, and note that $\eq(S)=\cap_{h\in S}\eq(g, h)$. Let $h\in S$ be non-injective. Then $\eq(g, h)\leq g^{-1}(\im(g)\cap\im(h))$, while $g^{-1}(\im(g)\cap\im(h))$ is cyclic because $\im(h)$ is cyclic and because $g$ is injective. Hence, $\rk(\eq(g, h))\leq1$ and so as $\eq(S)\leq\eq(g, h)$ we have that $\rk(\eq(S))\leq1$ as required.

For part \ref{rankSETS:noninjONLY}, suppose $S$ does not contain an injection. Then $\eq(S)$ is a normal subgroup of $F(a, b)$:
Consider $x\in\eq(S)$ and let $y\in F(a, b)$, then for all $g\in S$ we have $g(y^{-1}xy)=g(y^{-1})g(x)g(y)=g(x)$, as $\im(g)$ is cyclic, and so $y^{-1}xy\in\eq(S)$ as required.
If $\eq(S)$ is a normal subgroup of finite index $n$, say, then for all $x\in F(a, b)$ and all $g, h\in S$ we have that $g(x^n)=h(x^n)$ and so, as roots are unique in free groups, $g(x)=h(x)$; this says that $|S|=1$, which is impossible as $|S|\geq2$.
Therefore, $\eq(S)$ is a normal subgroup of infinite index, so is either trivial or not finitely generated.
However, $\eq(S)$ is non-trivial as $[a, b]\in\eq(S)$ because $[a, b]\in\ker(g)$ for all $g\in S$, and the result follows.
\end{proof}

We now give examples of sets of maps which show that $\eq(S)$ can have any of the possible ranks in parts \ref{rankSETS:injONLY} and \ref{rankSETS:injANDnoninj} of Theorem \ref{thm:rankSETS}.

\begin{example}
\label{ex:rankSETS}
Take $\Delta=\{x, y\}$.
We start with injective maps, as in part \ref{rankSETS:injONLY} of the theorem.
If $g(a)=x$, $g(b)=y$ and $h(a)=y$, $h(b)=x$ then both maps are injective and $\eq(g, h)=1$.
If $g(a)=x$, $g(b)=y$ and $h(a)=x$, $h(b)=y^{-1}$ then both maps are injective and $\eq(g, h)=\langle a\rangle$.
If $g(a)=xy$, $g(b)=y$ and $h(a)=x$, $h(b)=y$ then both maps are injective, while $b, aba^{-1}\in\eq(g, h)$ so $\eq(g, h)$ is non-abelian and hence has rank two.

We now take one injective and one non-injective map, as in part \ref{rankSETS:injANDnoninj} of the theorem.
If $g(a)=x$, $g(b)=y$ and $h(a)=y$, $h(b)=1$ then $g$ is injective, $h$ is non-injective and $\eq(g, h)=1$.
If $g(a)=x$, $g(b)=y$ and $h(a)=x$, $h(b)=1$ then $g$ is injective, $h$ is non-injective and $\eq(g, h)=\langle x\rangle$.
\end{example}

%%%----------------------------------------------%%%
%%%------------Retracts------------------------%%%
%%%----------------------------------------------%%%

\section{Retracts}
\label{sec:Retract}
In this section we prove structural and algorithmic results when one or both of the maps $g, h: F(\Sigma)\rightarrow F(\Delta)$ has image which is a retract of $\langle\im(g), \im(h)\rangle$. In particular, we prove Theorems \ref{thm:Retract} and \ref{thm:RetractAlgorithmic}.

\p{Structural results}
We begin by proving that retracts are inert, and hence Theorem \ref{thm:Inert} is applicable.
The proof of this result is due to Antol\'in--Jaikin-Zapirain, who introduced and studied \emph{$L^2$-invariant subgroups}.
They remarked that their results for surface groups imply that retracts of free groups are inert, and we explain here how to extract this from their article.
Our proof uses transitivity of retracts: if we have $A< B< C$ with $A$ a retract of $B$ and $B$ a retract of $C$ then $A$ is a retract of $C$ (compose the retraction maps).

\begin{proposition}
\label{prop:inertRetracts}
If $H$ is a retract of the free group $F$ then $H$ is inert in $F$.
\end{proposition}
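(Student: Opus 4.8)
The plan is to pass through surface groups. First note that a retract $H$ of a finitely generated free group $F$ is itself finitely generated (being a homomorphic image of $F$ under the retraction), and that the cases $\rk(F)\le 1$ are trivial, so I may assume $n:=\rk(F)\ge 2$. I would realise $F\cong F_n$ as the subgroup $F_0:=\langle a_1,\dots,a_n\rangle$ of the closed orientable surface group $S:=\pi_1(\Sigma_n)=\langle a_1,b_1,\dots,a_n,b_n\mid \textstyle\prod_{i=1}^n[a_i,b_i]\rangle$, and first check that $F_0$ is a \emph{retract} of $S$: the assignment $a_i\mapsto a_i,\ b_i\mapsto 1$ sends the surface relator to $\prod_i[a_i,1]=1$, hence defines a homomorphism $r\colon S\to F_0$; moreover $F_0$ is free of rank $n$, since $F_0$ has infinite index in $S$ (the quotient killing all the $a_i$ is the free group on the $b_i$, so the normal closure of $F_0$, and a fortiori $F_0$ itself, has infinite index), hence $F_0$ is free as a finitely generated infinite-index subgroup of a closed surface group, and it is $n$-generated while surjecting onto $\mathbb Z^n$ under abelianisation, forcing $\rk(F_0)=n$. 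Thus $r$ restricts to the identity on $F_0$ and $F_0$ is a retract of $S$.

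Now given the retract $H\le F\cong F_0$ of $F$, with retraction $\rho\colon F\twoheadrightarrow H$, transitivity of retracts (compose $\rho$ with $r$) shows $H$ is a retract of $S$. At this point I would invoke the work of Antol\'in--Jaikin-Zapirain: surface groups satisfy the strong Atiyah conjecture, and their results (via $L^2$-independence and the strengthened Hanna Neumann inequality for surface groups) give that a retract $H$ of $S$ satisfies $\beta_1^{(2)}(H\cap K)-\beta_0^{(2)}(H\cap K)\le\beta_1^{(2)}(K)-\beta_0^{(2)}(K)$ for every finitely generated $K\le S$ --- that is, $H$ is inert in $S$ in the $L^2$ sense.

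Finally I would specialise to an arbitrary finitely generated $K\le F\le S$. Then $K$ is free of finite rank, and $H\cap K$ is finitely generated (Howson's property, $H$ and $K$ both being finitely generated) and free; for a finitely generated free group $L$ one has $\beta_1^{(2)}(L)-\beta_0^{(2)}(L)=\rk(L)-1$, so the displayed inequality becomes $\rk(H\cap K)-1\le\rk(K)-1$, i.e.\ $\rk(H\cap K)\le\rk(K)$. Since the inequality is vacuous for infinitely generated $K$, this shows $H$ is inert in $F$.

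I expect the routine parts --- the embedding, well-definedness and surjectivity of $r$, freeness and rank of $F_0$, and Howson's property --- to be straightforward. The one substantive ingredient, and the main obstacle, is the $L^2$-theoretic inertness of retracts of surface groups: this must be quoted from Antol\'in--Jaikin-Zapirain, and the delicate point is to pin down their precise statement and reconcile their normalisation of the $L^2$-Euler characteristic with the reduced rank $\rk(\,\cdot\,)-1$, so that restricting to free subgroups produces exactly $\rk(H\cap K)\le\rk(K)$.
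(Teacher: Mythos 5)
Your proof is correct and follows essentially the same route as the paper: embed $F$ as a retract of a closed surface group, use transitivity of retracts, and invoke Antol\'in--Jaikin-Zapirain to conclude that retracts of surface groups are inert, then restrict to subgroups of $F$. The only difference is that you spell out the explicit embedding and the $L^2$-normalisation, which the paper handles by directly citing \cite[Propositions 4.5 and 5.2]{antolin2020hanna}.
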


\begin{proof}
There exists an embedding $\phi: F\hookrightarrow S_g$ of $F$ into a surface group such that $\phi(F)$ is a retract of $S_g$.
By transitivity of retracts, $\phi(H)$ is a retract of $S_g$.
Hence, $\phi(H)$ is $L^2$-invariant in $S_g$ \cite[Proposition 4.5]{antolin2020hanna}. Therefore, $\phi(H)$ is inert in $S_g$ \cite[Proposition 5.2]{antolin2020hanna}, and thus inert in $\phi(F)$.
The result follows.
\end{proof}

We therefore have the following corollary of Theorem \ref{thm:Inert}.

\begin{corollary}
\label{corol:Retract}
Let $g, h: F(\Sigma)\rightarrow F(\Delta)$ be injective homomorphisms.
\begin{enumerate}[label=(\arabic*)]
\item\label{retract:1} If $\im(g)$ is a retract of $\langle \im(g)\cup\im(h)\rangle$ then $\rk(\eq(g, h))\leq|\Sigma|$.
\item\label{retract:2} If both $\im(g)$ and $\im(h)$ are retracts of $\langle \im(g)\cup\im(h)\rangle$ then $\eq(g, h)$ is an inert subgroup of $F(\Sigma)$.
\end{enumerate}
\end{corollary}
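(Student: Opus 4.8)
The plan is to deduce this immediately from Theorem \ref{thm:Inert} by observing that the retract hypothesis is strictly stronger than the inertness hypothesis used there. Concretely, Proposition \ref{prop:inertRetracts} tells us that any retract of a free group is inert in it. Since $\langle\im(g)\cup\im(h)\rangle$ is a subgroup of the free group $F(\Delta)$, it is itself free, so the proposition applies to retracts of $\langle\im(g)\cup\im(h)\rangle$.

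For part \ref{retract:1}, I would argue as follows: assuming $\im(g)$ is a retract of $\langle\im(g)\cup\im(h)\rangle$, Proposition \ref{prop:inertRetracts} gives that $\im(g)$ is inert in $\langle\im(g)\cup\im(h)\rangle$. This is exactly the hypothesis of Theorem \ref{thm:Inert}.\ref{Inert:1}, which yields $\rk(\eq(g,h))\leq|\Sigma|$. For part \ref{retract:2}, the same reasoning applied to both $\im(g)$ and $\im(h)$ shows each is inert in $\langle\im(g)\cup\im(h)\rangle$, so Theorem \ref{thm:Inert}.\ref{Inert:2} gives that $\eq(g,h)$ is inert in $F(\Sigma)$.

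There is no real obstacle here: the work has already been done in establishing Proposition \ref{prop:inertRetracts} and Theorem \ref{thm:Inert}. The only point to be slightly careful about is that the ambient group in which the retract lives, namely $\langle\im(g)\cup\im(h)\rangle$, is free (being a subgroup of $F(\Delta)$), so Proposition \ref{prop:inertRetracts} is genuinely applicable; this is automatic by the Nielsen--Schreier theorem and needs no comment beyond perhaps a parenthetical remark. Thus the proof is a one-line chaining of the two earlier results, once for each part.

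\begin{proof}
By Proposition \ref{prop:inertRetracts}, a retract of a free group is inert in that group. The group $\langle\im(g)\cup\im(h)\rangle$ is free, being a subgroup of $F(\Delta)$. Hence if $\im(g)$ is a retract of $\langle\im(g)\cup\im(h)\rangle$ then $\im(g)$ is inert in $\langle\im(g)\cup\im(h)\rangle$, and part \ref{retract:1} follows from Theorem \ref{thm:Inert}.\ref{Inert:1}. If moreover $\im(h)$ is a retract of $\langle\im(g)\cup\im(h)\rangle$ then $\im(h)$ is likewise inert in $\langle\im(g)\cup\im(h)\rangle$, and part \ref{retract:2} follows from Theorem \ref{thm:Inert}.\ref{Inert:2}.
\end{proof}
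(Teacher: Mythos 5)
Your proof is correct and matches the paper's route exactly: the paper presents this corollary as an immediate consequence of Proposition \ref{prop:inertRetracts} (retracts of free groups are inert) combined with Theorem \ref{thm:Inert}, which is precisely your chaining. Your parenthetical that $\langle\im(g)\cup\im(h)\rangle$ is free by Nielsen--Schreier is a harmless extra remark the paper leaves implicit.
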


To improve on this corollary we give the following lemma, which re-visits the proof of Lemma \ref{lem:Inert:2}. If we take both classes of subgroups to be retracts then the hardest condition to verify is condition \ref{Inert:2:4}, which is a deep result of Bergman \cite[Lemma 18]{bergman1999supports}.

\begin{lemma}
\label{lem:Inert:Generalisations}
Let $g, h: F(\Sigma)\rightarrow F(\Delta)$ be injective homomorphisms, and let $\mathcal{C}_{\Sigma}$ and $\mathcal{C}_{\Delta}$ be classes of subgroups of $F(\Sigma)$ and $F(\Delta)$, respectively, such that:
\begin{enumerate}[label=(\alph*)]
\item\label{Inert:2:1}
$F(\Sigma)\in\mathcal{C}_{\Sigma}$,
\item\label{Inert:2:2}
if $A\in\mathcal{C}_{\Sigma}$ then $g(A), h(A)\in\mathcal{C}_{\Delta}$,
\item\label{Inert:2:4}
if $A, B\in\mathcal{C}_{\Delta}$ then $A\cap B\in\mathcal{C}_{\Delta}$,
\item\label{Inert:2:3}
if $A\in\mathcal{C}_{\Delta}$ and $A\leq\im(g)\cap\im(h)$ then $g^{-1}(A)\in\mathcal{C}_{\Sigma}$.
\end{enumerate}
The subgroups $H_0\geq H_1\geq\ldots$ of $F(\Sigma)$ in the definition of $\SD(g, h)$ are all contained in $\mathcal{C}_{\Sigma}$.
\end{lemma}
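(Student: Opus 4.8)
The plan is to prove the claim by a straightforward induction on $i$, following exactly the inductive skeleton used in the proof of Lemma \ref{lem:Inert:2}, but tracking membership in the abstract classes $\mathcal{C}_\Sigma$ and $\mathcal{C}_\Delta$ in place of inertness.

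For the base case, $H_0 = F(\Sigma)$ lies in $\mathcal{C}_\Sigma$ directly by hypothesis \ref{Inert:2:1}. For the inductive step, I would assume $H_i\in\mathcal{C}_\Sigma$ and argue as follows. By \ref{Inert:2:2} applied with $A=H_i$, both $g(H_i)$ and $h(H_i)$ lie in $\mathcal{C}_\Delta$; then by \ref{Inert:2:4}, $g(H_i)\cap h(H_i)\in\mathcal{C}_\Delta$. Next observe that $g(H_i)\leq g(F(\Sigma))=\im(g)$ and $h(H_i)\leq\im(h)$, so $g(H_i)\cap h(H_i)\leq\im(g)\cap\im(h)$. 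This containment is exactly what is needed to invoke \ref{Inert:2:3}, which then gives $g^{-1}(g(H_i)\cap h(H_i))\in\mathcal{C}_\Sigma$. Since $H_{i+1}:=g^{-1}(g(H_i)\cap h(H_i))$ by the definition of the stable domain, this yields $H_{i+1}\in\mathcal{C}_\Sigma$ and closes the induction.

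The argument is routine, so there is no serious obstacle; the one point that genuinely requires attention is verifying the containment $g(H_i)\cap h(H_i)\leq\im(g)\cap\im(h)$, since without it hypothesis \ref{Inert:2:3} could not be applied (its statement only controls $g^{-1}(A)$ for $A\leq\im(g)\cap\im(h)$). It is also worth remarking that injectivity of $g$ is what makes $g^{-1}$ a well-defined map on $\im(g)$ and ensures the $H_i$ form a genuinely nested chain, matching the hypotheses under which $\SD(g,h)$ was analysed earlier; neither fact is strictly needed for the bare statement of this lemma, but both are implicit in the setup and will be used when the lemma is applied.
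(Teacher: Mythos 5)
Your proof is correct and follows essentially the same induction as the paper's own argument, checking (a) for the base case and then applying (b), (c), (d) in turn for the inductive step. Your explicit verification that $g(H_i)\cap h(H_i)\leq\im(g)\cap\im(h)$, needed to invoke hypothesis (d), is a small point the paper leaves implicit.
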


\begin{proof}
Consider the subgroups $H_i$ in the definition of the stable domain.
By \ref{Inert:2:1}, $H_0=F(\Sigma)\in \mathcal{C}_{\Sigma}$.
Now, if $H_i\in\mathcal{C}_{\Sigma}$ then $g(H_i), h(H_i)\in\mathcal{C}_{\Delta}$ by \ref{Inert:2:2}.
Hence, $g(H_i)\cap h(H_i)\in\mathcal{C}_{\Delta}$, by \ref{Inert:2:4}, and so $H_{i+1}:=g^{-1}(g(H_i)\cap h(H_i))\in\mathcal{C}_{\Sigma}$ by \ref{Inert:2:3}.
It follows by induction that $H_i\in\mathcal{C}_{\Sigma}$ for all $i\geq0$.
\end{proof}

A \emph{proper retract} of a group is a proper subgroup which is a retract.
Note that if $H$ is a proper retract of a free group $F(\Delta)$ then $\rk(H)\lneq\rk(F(\Delta))$, as $F(\Delta)$ surjects onto $H$ and free groups are Hopfian.
The following lemma covers $H_i$ with $i\geq1$ but not $H_0$ because $H_0=F(\Sigma)$ is not a proper retract of $F(\Sigma)$.
Note that if we have $A< B< C$ with $A$ a retract of $C$ then $A$ is a retract of $B$ (restrict the retraction map to $B$).

\begin{lemma}
\label{lem:retractSD}
Let $g, h: F(\Sigma)\rightarrow F(\Delta)$ be injective homomorphisms.
If both $\im(g)$ and $\im(h)$ are retracts of $\langle \im(g)\cup\im(h)\rangle$ and $\im(g)\neq\im(h)$ then
\begin{enumerate}
\item\label{retractSD:1} the subgroups $H_1\geq H_2\geq\ldots$ of $F(\Sigma)$ in the definition of $\SD(g, h)$, and
\item\label{retractSD:2} the stable domain $\SD(g, h)$
\end{enumerate}
are all proper retracts of $F(\Sigma)$.
\end{lemma}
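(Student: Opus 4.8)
The plan is to deduce the \emph{retract} assertions from Lemma \ref{lem:Inert:Generalisations} and to treat \emph{properness} separately. I would apply Lemma \ref{lem:Inert:Generalisations} with $\mathcal{C}_\Sigma$ the class of retracts of $F(\Sigma)$ and $\mathcal{C}_\Delta$ the class of retracts of $J:=\langle\im(g)\cup\im(h)\rangle$, and verify its four hypotheses. Condition \ref{Inert:2:1} is trivial. For \ref{Inert:2:2}: if $A$ is a retract of $F(\Sigma)$ with retraction $\rho$, then $g\rho g^{-1}$ is a retraction of $\im(g)$ onto $g(A)$, and since $\im(g)$ is a retract of $J$ by hypothesis, transitivity of retracts makes $g(A)$ a retract of $J$; likewise for $h(A)$. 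Condition \ref{Inert:2:4} — that an intersection of two retracts of a free group is a retract — is Bergman's Lemma~18 \cite[Lemma 18]{bergman1999supports}. For \ref{Inert:2:3}: if $A$ is a retract of $J$ and $A\leq\im(g)\cap\im(h)\leq\im(g)$, then restricting the retraction to $\im(g)$ shows $A$ is a retract of $\im(g)$, whence $g^{-1}(A)$ is a retract of $F(\Sigma)$ via the isomorphism $g^{-1}\colon\im(g)\to F(\Sigma)$. Lemma \ref{lem:Inert:Generalisations} then yields that every $H_i$ is a retract of $F(\Sigma)$.

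It remains to see the $H_i$ with $i\geq1$, and $\SD(g,h)$, are proper. The crux is that $\im(g)\neq\im(h)$ forces $\im(g)\cap\im(h)$ to be a proper subgroup of $\im(g)$ (and of $\im(h)$). Indeed, if $\im(g)\leq\im(h)$ then $J=\im(h)$, so $\im(g)$ is a retract of $\im(h)$; but $g$ and $h$ are injective, so $\rk(\im(g))=|\Sigma|=\rk(\im(h))$, and a finitely generated free group has no proper retract of its own rank (being Hopfian — as recorded just before the lemma), forcing $\im(g)=\im(h)$, a contradiction. Symmetrically $\im(h)\not\leq\im(g)$. Hence $\im(g)\cap\im(h)\lneq\im(g)$, and applying $g^{-1}$ gives $H_1=g^{-1}(\im(g)\cap\im(h))\lneq F(\Sigma)$. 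As $H_i\leq H_1$ for all $i\geq1$, each such $H_i$ is a proper retract of $F(\Sigma)$, proving \ref{retractSD:1}.

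For \ref{retractSD:2} I would appeal to Takahasi's Proposition \ref{prop:Exercise}. The $H_i$ are retracts of $F(\Sigma)$, so $\rk(H_i)\leq|\Sigma|$ for all $i$; hence the nested sequence $H_1\geq H_2\geq\cdots$ has bounded rank, and $\SD(g,h)=\cap_{i\geq1}H_i$ is a free factor of $H_N$ for some $N\geq1$. A free factor is a retract, and $H_N$ is a retract of $F(\Sigma)$, so transitivity of retracts makes $\SD(g,h)$ a retract of $F(\Sigma)$; and $\SD(g,h)\leq H_1\lneq F(\Sigma)$ makes it proper.

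The main obstacle, apart from the fact that condition \ref{Inert:2:4} rests on the deep Lemma~18 of Bergman, is the properness argument: one must genuinely use both $\im(g)\neq\im(h)$ and the injectivity of $g$ and $h$ — the latter to equate the ranks of $\im(g)$, $\im(h)$ and (when one contains the other) $J$, so that the Hopficity obstruction applies. Everything else is a routine transfer of the inertness-flavoured bookkeeping of Lemma \ref{lem:Inert:2} through the isomorphisms $g$, $g^{-1}$ and the transitivity and restriction properties of retracts.
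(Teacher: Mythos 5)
Your proof is correct and follows essentially the same route as the paper's: apply Lemma \ref{lem:Inert:Generalisations} with the two retract classes (verifying conditions \ref{Inert:2:1}--\ref{Inert:2:3} exactly as you do, with Bergman's Lemma~18 for intersections), then get properness from $H_1\lneq H_0$ and handle $\SD(g,h)$ via Proposition \ref{prop:Exercise} together with transitivity of retracts. You are in fact more careful than the paper at one point: the paper asserts $H_0\gneq H_1$ directly from $\im(g)\neq\im(h)$, whereas this genuinely requires ruling out $\im(g)\leq\im(h)$ (and symmetrically), which your Hopficity argument --- equal ranks of $\im(g)$ and $\im(h)$ under injectivity, plus the fact that a finitely generated free group has no proper retract of full rank --- supplies.
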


\begin{proof}
Let $g, h: F(\Sigma)\rightarrow F(\Delta)$ be as in the statement of the lemma.
We first prove that the $H_i$ are retracts of $F(\Sigma)$, and to do this it is sufficient to prove that if we take $\mathcal{C}_{\Sigma}$ and $\mathcal{C}_{\Delta}$ to be the retracts of $F(\Sigma)$ and $\langle \im(g)\cup\im(h)\rangle$ respectively then these satisfy the conditions of Lemma \ref{lem:Inert:Generalisations}.
So, condition \ref{Inert:2:1} of the lemma immediately holds, while \ref{Inert:2:4} in known to hold \cite[Lemma 18]{bergman1999supports}.
For condition \ref{Inert:2:2}, if $A$ is a retract of $F(\Sigma)$ then $g(A)$ is a retract of $\im(g)$, and so, by transitivity, $g(A)$ is a retract of $\langle \im(g)\cup\im(h)\rangle$. Similarly, $h(A)$ is a retract of $\langle \im(g)\cup\im(h)\rangle$, so \ref{Inert:2:2} holds.
For condition \ref{Inert:2:3},
suppose $A$ is a retract of $\langle\im(g)\cup\im(h)\rangle$ and $A\leq\im(g)\cap\im(h)$.
Then we have $A\leq\im(g)\leq\langle\im(g)\cup\im(h)\rangle$, and so $A$ is a retract of $\im(g)$. As $g$ is injective, we have that $g^{-1}(A)$ is a retract of $F(\Sigma)$, as required. Hence, the $H_i$ are retracts of $F(\Sigma)$, as claimed.

As $\im(g)\neq\im(h)$, we have that $H_0\gneq H_1$, and so every $H_i$ for $i\geq1$ is a proper retract of $F(\Sigma)$. Hence, \eqref{retractSD:1} holds. For \eqref{retractSD:2}, recall that $\SD(g, h):=\cap H_i$. By Proposition \ref{prop:Exercise}, $\SD(g, h)$ is a free factor of all but finitely many of the $H_i$ and so is a retract of some proper retract of $F(\Sigma)$. The result follows.
\end{proof}

We can now prove Theorem \ref{thm:Retract}.

\begin{proof}[Proof of Theorem \ref{thm:Retract}]
Combining the inequalities $\rk(\eq(g, h))\leq\rk(\SD(g, h))$ and $\rk(\SD(g, h))\lneq |\Sigma|$ gives the result; these hold by Theorem \ref{thm:EqRankBoundedSD} and Lemma \ref{lem:retractSD} respectively.
\end{proof}

We now generalise Theorem \ref{thm:Retract} to sets of homomorphisms.
Recall from Corollary \ref{thm:InertSETS} that for a set $S: F(\Sigma)\rightarrow F(\Delta)$ of homomorphisms, $\Gamma_S$ is the graph with vertex set $S$, with an edge connecting $g, h\in S$ if $\im(g)$ and $\im(h)$ are inert in $\langle\im(g)\cup\im(h)\rangle$.

\begin{corollary}
\label{thm:RetractSETS}
Let $S: F(\Sigma)\rightarrow F(\Delta)$ be a set of injective homomorphisms such that the graph $\Gamma_S$ is connected, and such that there exists some $g,h\in S$ such that $\im(g)$ and $\im(h)$ are retracts of $\langle \im(g)\cup\im(h)\rangle$ and $\im(g)\neq\im(h)$. Then $\rk(\eq(S))\lneq|\Sigma|$.
\end{corollary}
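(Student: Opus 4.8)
The plan is to combine the inertness statement for connected $\Gamma_S$ (Corollary \ref{thm:InertSETS}) with the strict rank bound supplied by Theorem \ref{thm:Retract} for the distinguished pair. First I would observe that, since $\Gamma_S$ is connected and $S$ consists of injective homomorphisms, Corollary \ref{thm:InertSETS} applies and yields that $\eq(S)$ is an inert subgroup of $F(\Sigma)$. (The hypothesis that $g$ and $h$ have images which are retracts of $\langle\im(g)\cup\im(h)\rangle$ is compatible with connectedness, since retracts are inert by Proposition \ref{prop:inertRetracts}, so $g$ and $h$ in fact span an edge of $\Gamma_S$; but this remark is not needed for the argument.)

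Next I would invoke Theorem \ref{thm:Retract} for the distinguished pair $g, h\in S$: since both $\im(g)$ and $\im(h)$ are retracts of $\langle\im(g)\cup\im(h)\rangle$ and $\im(g)\neq\im(h)$, we obtain $\rk(\eq(g, h))\lneq|\Sigma|$.

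Finally I would combine these two facts. Because $g, h\in S$ we have $\eq(S)\leq\eq(g, h)$, hence $\eq(S)=\eq(S)\cap\eq(g, h)$. Applying the inertness of $\eq(S)$ in $F(\Sigma)$ with the subgroup $K=\eq(g, h)$ then gives
\[
\rk(\eq(S))=\rk\bigl(\eq(S)\cap\eq(g, h)\bigr)\leq\rk(\eq(g, h))\lneq|\Sigma|,
\]
as required.

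I do not expect any substantial obstacle here; the proof is a short assembly of earlier results. The only point requiring care is the logical arrangement: one needs the \emph{general} inertness conclusion of Corollary \ref{thm:InertSETS} precisely so that intersecting the possibly large subgroup $\eq(S)$ against the small subgroup $\eq(g, h)$ cannot raise the rank, which in turn pins down $\rk(\eq(S))$. (Equivalently, one could note that inertness of $\eq(S)$ in $F(\Sigma)$ descends to inertness of $\eq(S)$ in $\eq(g, h)$, but the direct intersection bound is cleaner.)
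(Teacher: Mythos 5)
Your proof is correct and is essentially identical to the paper's own argument: both invoke Theorem \ref{thm:Retract} for the distinguished pair to get $\rk(\eq(g,h))\lneq|\Sigma|$, invoke Corollary \ref{thm:InertSETS} for inertness of $\eq(S)$ in $F(\Sigma)$, and conclude via $\rk(\eq(S))=\rk(\eq(S)\cap\eq(g,h))\leq\rk(\eq(g,h))$. Nothing further is needed.
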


\begin{proof}
By Theorem \ref{thm:Retract}, if $g, h\in S$ are the maps in the statement of the corollary then $\rk(\eq(g, h))\lneq|\Sigma|$.
By Corollary \ref{thm:InertSETS}, $\eq(S)$ is an inert subgroup of $F(\Sigma)$, and so as $\eq(S)=\eq(S)\cap\eq(g, h)$ we have $\rk(\eq(S))=\rk(\eq(S)\cap\eq(g, h))\leq\rk(\eq(g, h))\lneq|\Sigma|$, as required.
\end{proof}

\p{Algorithmic results}
Our first algorithmic result is the computation of the stable domain of $g$ with $h$.
We store free group homomorphisms in terms of images of basis elements, so for example the maps $g, h: F(\Sigma)\rightarrow F(\Delta)$ are stored as lists $(g(x_i)\mid x_i\in\Sigma)$ and $(h(x_i)\mid x_i\in\Sigma)$.

\begin{lemma}
\label{lem:algorithmicSD}
There exists an algorithm with input a pair of injective homomorphisms $g, h: F(\Sigma)\rightarrow F(\Delta)$ such that both $\im(g)$ and $\im(h)$ are retracts of $\langle \im(g)\cup\im(h)\rangle$, and output a basis for $\SD(g, h)$.
\end{lemma}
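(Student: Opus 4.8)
The plan is to compute $\SD(g,h)$ by the obvious iteration: compute a basis for $H_0=F(\Sigma)$ (namely $\Sigma$), then bases for $H_1,H_2,\ldots$ in turn, halting as soon as $H_{i+1}=H_i$ and returning the basis of $H_i$. Each step is effective. Given a basis of $H_i$ as words in $F(\Sigma)$, one gets generating sets for $g(H_i)$ and $h(H_i)$ by substitution, then a basis of $g(H_i)\cap h(H_i)\leq F(\Delta)$ by the standard algorithm for intersections of finitely generated subgroups of a free group (Stallings foldings). To pass to $H_{i+1}=g^{-1}(g(H_i)\cap h(H_i))$, note that since $g$ is injective the set $\{g(x)\mid x\in\Sigma\}$ is a free basis of $\im(g)$, and $g(H_i)\cap h(H_i)\leq g(H_i)\leq\im(g)$; hence each basis element of $g(H_i)\cap h(H_i)$ can be effectively rewritten as a word in $\{g(x)^{\pm1}\mid x\in\Sigma\}$, i.e.\ as $g(w)$ for an explicit $w\in F(\Sigma)$, and the resulting $w$'s generate $H_{i+1}$, from which a basis is obtained by folding. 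Finally, equality of the two finitely generated subgroups $H_{i+1}$ and $H_i$ of $F(\Sigma)$ is decidable, so the halting test is effective.

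It then remains to see that the process halts and that it returns $\SD(g,h)$. For correctness of the output: if $H_{i+1}=H_i$ then the recursion $H_{j+1}=g^{-1}(g(H_j)\cap h(H_j))$ forces $H_j=H_i$ for all $j\geq i$, so $\SD(g,h)=\cap_j H_j=H_i$. For termination I would use the retract hypothesis via Lemma \ref{lem:retractSD}. If $\im(g)=\im(h)$ then $H_1=g^{-1}(\im(g)\cap\im(h))=g^{-1}(\im(g))=F(\Sigma)=H_0$ and the algorithm halts at once. If $\im(g)\neq\im(h)$, then by Lemma \ref{lem:retractSD}\eqref{retractSD:1} each $H_i$ with $i\geq1$ is a proper retract of $F(\Sigma)$; since $H_{i+1}\leq H_i\leq F(\Sigma)$ and $H_{i+1}$ is a retract of $F(\Sigma)$, restricting the retraction shows $H_{i+1}$ is a retract of $H_i$. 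Hence $H_i$ surjects onto $H_{i+1}$, so $\rk(H_{i+1})\leq\rk(H_i)$, and $\rk(H_0)\geq\rk(H_1)\geq\cdots$ is a non-increasing sequence of non-negative integers. Moreover, if $\rk(H_{i+1})=\rk(H_i)$ then the retraction $\rho\colon H_i\twoheadrightarrow H_{i+1}$ is a surjection between finitely generated free groups of equal rank, hence an isomorphism since finitely generated free groups are Hopfian; as $\rho$ fixes $H_{i+1}$ pointwise and $\rho(x)\in H_{i+1}$ for every $x\in H_i$, injectivity of $\rho$ gives $\rho(x)=x$, so $H_i=H_{i+1}$. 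Thus the subgroup sequence becomes constant exactly when its rank does, which happens after at most $|\Sigma|$ iterations; at that point $H_{i+1}=H_i$ and the algorithm halts with output $\SD(g,h)$.

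The main obstacle is the termination argument. A priori the $H_i$ need only form an infinite strictly descending chain — indeed, without the retract hypothesis $\SD(g,h)$ can fail to be finitely generated — so one must combine Lemma \ref{lem:retractSD} (to force the $H_i$ to be retracts, hence to control their ranks) with the rigidity fact that an equal-rank retract of a finitely generated free group is the whole group. Once this is in place, the iteration bound and the correctness of the halting test are immediate, and the subroutines (substitution, intersection of finitely generated subgroups, rewriting an element in a known free basis, and deciding equality of finitely generated subgroups) are all classical.
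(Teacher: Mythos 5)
Your proposal is correct and follows essentially the same route as the paper: iterate the construction of the $H_i$ using standard Stallings-foldings subroutines, invoke Lemma \ref{lem:retractSD} to see that each $H_{i+1}$ is a retract of $H_i$, and conclude via Hopficity that the rank strictly drops until the chain stabilises, which must happen within $|\Sigma|$ steps at $\SD(g,h)$. Your explicit justification that an equal-rank retract of a finitely generated free group is the whole group is a welcome expansion of a step the paper leaves terse.
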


\begin{proof}
If $\im(g)=\im(h)$ then $\SD(g, h)=F(\Sigma)$ and there is nothing to prove. Therefore, suppose that $\im(g)\neq\im(h)$.

A basis for each of the subgroups $H_i$ in the definition of $\SD(g, h)$ can be constructed via standard algorithms.
In particular, we can compute a basis for $g(H_i)\cap h(H_i)$ \cite[Corollary 9.5]{Kapovich2002stallings}, and as $g$ is injective we can then compute a basis for $H_{i+1}=g^{-1}(g(H_i)\cap h(H_i))$.

Now, we have a descending chain $H_0\geq H_1\geq H_2\geq\ldots$ where by Lemma \ref{lem:retractSD} each $H_i$ is a retract of $H_0=F(\Sigma)$, and so $H_{i+1}$ is a retract of $H_i$.
If $H_{i+1}$ is a proper retract of $H_i$ then $\rk(H_i)\lneq\rk(H_{i+1})$.
If $H_{i+1}$ is not a proper retract of $H_i$ then $H_i=H_{i+1}$ and by the construction of these subgroups the chain stabilises, that is $H_i=H_j$ for all $j\geq i$.
By the definition of $\SD(g, h)$, if $H_i=H_{i+1}$ then $H_i=\SD(g, h)$.
Thus, each step in the chain $H_0\geq H_1\geq\ldots$ reduces the rank of the $H_i$ until the chain stabilises, and so it stabilises in at most $|\Sigma|$ steps. Moreover, it stabilises at the subgroup $\SD(g, h)$.

The algorithm to compute a basis for $\SD(g, h)$ is therefore as follows.
Compute bases for the subgroups $H_1, H_2, \ldots$ and at each step determine whether $H_i=H_{i+1}$ (we can check this equality via a standard algorithm \cite[Proposition 7.2]{Kapovich2002stallings}).
Stop when $H_i=H_{i+1}$ or when $i=|\Sigma|$, and output the basis for $H_i$ as a basis for $\SD(g, h)$.
\end{proof}

We can now prove Theorem \ref{thm:RetractAlgorithmic}, on computing a basis for $\eq(g, h)$.

\begin{proof}[Proof of Theorem \ref{thm:RetractAlgorithmic}]
We first establish three facts. Firstly, the image of the endomorphism $\phi_{(g, h)}$ of $\SD(g, h)$, as defined in Lemma \ref{lem:EqualAsFixed} by $x\mapsto h^{-1}(g(x))$, is a retract of $\SD(g, h)$.
To see this, firstly note that $\SD(g, h)$ is a retract of $F(\Sigma)$, by Lemma \ref{lem:retractSD}, so we have that $g(\SD(g, h))$ is a retract of $\im(g)$, and so, by transitivity of retracts, $g(\SD(g, h))$ is a retract of $\langle \im(g)\cup\im(h)\rangle$.
As $g(\SD(g, h))\leq h(\SD(g, h))$, by Lemma \ref{lem:CompareImages}, this means that $g(\SD(g, h))$ is a retract of $h(\SD(g, h))$.
As $h$ is injective, this gives us that $h^{-1}(g(\SD(g, h)))=\phi_{(g, h)}(\SD(g, h))$ is a retract of $\SD(g, h)$, as claimed.

Our second fact, which is immediate from the definitions, is that the stable image $\phi_{(g, h)}^{\infty}:=\cap_{i=1}^{\infty}\phi_{(g, h)}^{i}$ is the stable domain of the identity function $\operatorname{id}\in\emo(\SD(g, h))$ with $\phi_{(g, h)}$, that is $\phi_{(g, h)}^{\infty}=\SD(\operatorname{id}, \phi_{(g, h)}).$

Our third fact is that the equaliser $\eq(g, h)$ is equal to the fixed subgroup $\fix(\phi_{(g, h)}|_{\phi_{(g, h)}^{\infty}})$, as
by applying Lemma \ref{lem:EqualAsFixed} twice we get
\[
\eq(g, h)=\fix(\phi_{(g, h)})=\eq(\operatorname{id}, \phi_{(g, h)})=\fix(\phi_{(g, h)}|_{\phi_{(g, h)}^{\infty}}).
\]

The algorithm to compute a basis for $\eq(g, h)$ is as follows.
Firstly, use Lemma \ref{lem:algorithmicSD} to compute a basis $\mathcal{B}_1$ for $\SD(g, h)$.
Then compute the map $\phi_{(g, h)}$, which we may do as for each $x\in\mathcal{B}_1$ we have $\phi_{(g, h)}(x)=h^{-1}(g(x))$.
Next, use Lemma \ref{lem:algorithmicSD} to compute a basis $\mathcal{B}_2$ for $\SD(\operatorname{id}, \phi_{(g, h)})=\phi_{(g, h)}^{\infty}$, which we may do as $\im(\phi_{(g, h)})$ is a retract of $\SD(g, h)$.
Now, $\phi_{(g, h)}$ acts as an automorphism on $\phi_{(g, h)}^{\infty}$, and so use the known algorithm in this setting to compute a basis $\mathcal{B}_3$ for $\fix(\phi_{(g, h)}|_{\phi_{(g, h)}^{\infty}})$ \cite{Bogopolski2016algorithm}.
Output this set $\mathcal{B}_3$ as a basis for $\eq(g, h)$.
\end{proof}

We now generalise Theorem \ref{thm:RetractAlgorithmic} to sets of homomorphisms, and to do this we define a new graph: For a set $S: F(\Sigma)\rightarrow F(\Delta)$ of homomorphisms, $\Gamma_S^r$ is the graph with vertex set $S$, with an edge connecting $g, h\in S$ if $\im(g)$ and $\im(h)$ are retracts of $\langle\im(g)\cup\im(h)\rangle$.

\begin{corollary}
\label{thm:RetractAlgorithmicSETS}
There exists an algorithm with input a finite set of injective homomorphisms $S: F(\Sigma)\rightarrow F(\Delta)$ such that the graph $\Gamma_S^r$ is connected, and output a basis for $\eq(S)$.
\end{corollary}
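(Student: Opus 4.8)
The plan is to mimic the reduction used for Corollary \ref{thm:RetractSETS}, turning connectivity of $\Gamma_S^r$ into a reduction to the pairwise algorithm of Theorem \ref{thm:RetractAlgorithmic}. In outline: (i) write $\eq(S)$ as the finite intersection $\bigcap\eq(g,h)$ over the edges $\{g,h\}$ of a spanning tree $T$ of $\Gamma_S^r$; (ii) compute a basis of each such $\eq(g,h)$ by Theorem \ref{thm:RetractAlgorithmic}; (iii) compute a basis of the intersection by the standard Stallings-graph algorithm for intersections of finitely generated subgroups of a free group \cite{Kapovich2002stallings}, and output it. Finiteness of $S$ is what makes $T$ finite and the whole procedure an algorithm. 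Step (i) is the argument from the proof of Corollary \ref{thm:InertSETS}: the inclusion $\eq(S)\leq\eq(g,h)$ is trivial, and if $x$ lies in $\eq(g,h)$ for every edge $\{g,h\}$ of $T$ then, walking the path in $T$ between any two members of $S$, all maps of $S$ agree on $x$, so $x\in\eq(S)$.

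The only genuinely new difficulty is locating the spanning tree: we need edges on which the hypotheses of Theorem \ref{thm:RetractAlgorithmic} hold, i.e. on which $\im(g)$ and $\im(h)$ are retracts of $\langle\im(g)\cup\im(h)\rangle$, whereas we are only promised that $\Gamma_S^r$ is connected, not handed its edges. I expect this to be the main obstacle, since a retraction is encoded by arbitrarily long words and so naive search does not recognise retracts. The route I would take sidesteps recognising retracts entirely, by making the pairwise algorithm self-certifying: run the algorithm underlying Theorem \ref{thm:RetractAlgorithmic} verbatim on each of the finitely many pairs $\{g,h\}\subseteq S$, but insert two checks. After it produces its candidate $K$ for $\SD(g,h)$ (via Lemma \ref{lem:algorithmicSD}), test whether $g(K)\leq h(K)$; since $\SD(g,h)\leq K$ always, the maximality in Lemma \ref{lem:CompareImages} then forces $K=\SD(g,h)$, and this test does succeed whenever $\im(g)$ and $\im(h)$ are retracts. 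After it produces its candidate $K'$ for the stable image $\phi_{(g,h)}^{\infty}$, test whether $\phi_{(g,h)}$ maps $K'$ onto $K'$ (equivalently, as $\phi_{(g,h)}$ is injective, restricts to an automorphism of $K'$); since $\phi_{(g,h)}^{\infty}\leq K'$ always, this forces $K'=\phi_{(g,h)}^{\infty}$, and again it holds whenever the images are retracts. If either check fails, discard the pair.

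Every edge of $\Gamma_S^r$ survives this filter, and every surviving pair yields a provably correct basis for its equaliser: once $K=\SD(g,h)$ is certified the downstream computation runs in a valid context, and the chain of identities $\eq(g,h)=\fix(\phi_{(g,h)})=\eq(\operatorname{id},\phi_{(g,h)})=\fix(\phi_{(g,h)}|_{\phi_{(g,h)}^{\infty}})$ from the proof of Theorem \ref{thm:RetractAlgorithmic} uses only injectivity of $g$ and $h$. Hence the surviving pairs form a connected subgraph $G$ of the complete graph on $S$ with $\Gamma_S^r\subseteq G$; running Steps (i)--(iii) with a spanning tree of $G$ in place of one of $\Gamma_S^r$ — the connectivity argument is identical — produces $\eq(S)$. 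If one instead prefers to identify $\Gamma_S^r$ directly, an alternative is to decide retract membership through the solvability of an explicit finite system of equations over $\langle\im(g)\cup\im(h)\rangle$, with the unknown images of a basis constrained to $\im(g)$ respectively $\im(h)$ and the retraction relations imposed; but the self-certifying route keeps the proof self-contained.
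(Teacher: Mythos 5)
Your proposal is correct, and its skeleton coincides with the paper's: decompose $\eq(S)$ as a finite intersection of pairwise equalisers along a connected subgraph, compute each $\eq(g,h)$ via Theorem \ref{thm:RetractAlgorithmic} (or via $\fix(gh^{-1})$ when $\im(g)=\im(h)$), and intersect using Stallings graphs. Where you genuinely diverge is in confronting a point the paper's proof passes over in silence: the algorithm is handed only the set $S$, not the edge set of $\Gamma_S^r$, and recognising whether $\im(g)$ and $\im(h)$ are retracts of $\langle\im(g)\cup\im(h)\rangle$ is not addressed anywhere in the paper. Your self-certification fix is sound. Since the candidate $K$ output by the procedure of Lemma \ref{lem:algorithmicSD} always contains $\SD(g,h)$, the decidable test $g(K)\leq h(K)$ together with the maximality clause of Lemma \ref{lem:CompareImages} really does force $K=\SD(g,h)$, and the analogous test $\phi_{(g,h)}(K')=K'$ forces $K'=\phi_{(g,h)}^{\infty}$ (via Lemma \ref{lem:CompareImages} applied to the pair $(\operatorname{id},\phi_{(g,h)})$, which needs only injectivity of $\phi_{(g,h)}$, i.e.\ of $g$); both tests pass on every genuine edge of $\Gamma_S^r$ by Lemmas \ref{lem:retractSD} and \ref{lem:algorithmicSD}, and any pair passing both tests yields a provably correct basis for its equaliser because the chain $\eq(g,h)=\fix(\phi_{(g,h)})=\fix(\phi_{(g,h)}|_{\phi_{(g,h)}^{\infty}})$ uses only injectivity. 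The surviving graph therefore contains $\Gamma_S^r$, hence is connected, and the spanning-tree argument goes through. What your route buys is a proof that is honest about the input model (a promise algorithm on the bare set $S$); what the paper's shorter route buys is brevity at the cost of implicitly assuming either that the edges of $\Gamma_S^r$ are part of the input or that retracts can be recognised --- the latter is in fact decidable, as you note, via systems of equations with rational constraints, but this is a nontrivial external result that the paper does not invoke. Your version is the more complete argument.
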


\begin{proof}
As $\Gamma_S^r$ is connected, $\eq(S)$ is the intersection of those equalisers $\eq(g, h)$ such that there is a edge connecting $g$ and $h$.
For any such connected pair $g, h\in S$, if $\im(g)\neq\im(h)$ then we can compute a basis for $\eq(g, h)$ by Theorem \ref{thm:RetractAlgorithmic}, while if $\im(g)=\im(h)$ then $\eq(g, h)=\fix(gh^{-1})$, and we can compute a basis using existing algorithms \cite{Bogopolski2016algorithm}.
As $S$ is finite, we can then compute a basis for $\eq(S)$ \cite[Corollary 9.5]{Kapovich2002stallings}, and the result follows.
\end{proof}

%%%----------------------------------------------%%%
%%%------------Inertly-induced maps---------%%%
%%%----------------------------------------------%%%

\section{Inertly induced maps}
\label{sec:induced}

Recall from the introduction that a pair of homomorphisms $g, h: F(\Sigma)\rightarrow F(\Delta)$ is \emph{inertly induced} if the pair can be viewed as the restrictions of a pair of homomorphisms $g', h': F(\Sigma')\rightarrow F(\Delta')$ such that $\im(g')$ and $\im(h')$ are inert subgroups of $\langle \im(g')\cup\im(h')\rangle$; that is, if there exists embeddings $\iota: F(\Sigma)\hookrightarrow F(\Sigma')$ and $\tau: F(\Delta)\hookrightarrow F(\Delta')$ and a pair of homomorphisms $g', h': F(\Sigma')\rightarrow F(\Delta')$ such that $g'(\iota(x))=\tau(g(x))$ and $h'(\iota(x))=\tau(h(x))$ for all $x\in F(\Sigma)$, and such that $\im(g')$ and $\im(h')$ are inert subgroups of $\langle \im(g')\cup\im(h')\rangle$.
Similarly, a set of maps $S: F(\Sigma)\rightarrow F(\Delta)$ is \emph{$F_2$-induced} if the set can be viewed as the restrictions of a set of homomorphisms $S': F(a', b')\rightarrow F(\Delta')$. Note that pairs of $F_2$-induced maps are inertly induced, as subgroups of rank two are inert in free groups.

\begin{example}
\label{ex:InertlyInduced}
The pair $g, h: F\{x, y, z\}\rightarrow F\{a,b\}$ defined by $g: x\mapsto a^4, y\mapsto a^{-1}b^2a, z\mapsto aba$ and $h: x\mapsto b^2, y\mapsto a^6, z\mapsto ba^3$ is induced by the pair $g', h':F\{x', y'\}\rightarrow F\{a, b\}$ defined by $g': x'\mapsto a^2, y'\mapsto a^{-1}ba$ and $h': x'\mapsto b, y'\mapsto a^3$, under the embedding $\iota: x\mapsto (x')^2, y\mapsto (y')^2, z\mapsto x'y'$. Therefore, the pair $g, h$ is $F_2$-induced, and hence inertly induced.
\end{example}

We now prove Corollary \ref{corol:InertiaInduced} from the introduction.

\begin{proof}[Proof of Corollary \ref{corol:InertiaInduced}]
We have that $\iota(\eq(g, h))=\iota(F(\Sigma))\cap\eq(g', h')$. As $\eq(g', h')$ is inert, by Theorem \ref{thm:Inert}.\ref{Inert:2}, we have that $\rk(\iota(\eq(g, h)))\leq\rk(\iota(F(\Sigma)))$. The result then follows as $\iota$ is injective.
\end{proof}

We now prove Corollary \ref{corol:F2Induced} from the introduction.

\begin{proof}[Proof of Corollary \ref{corol:F2Induced}]
We have that $\iota(\eq(S))=\iota(F(\Sigma))\cap\eq(S')$. As $\eq(S')$ has rank at most two, by Theorem \ref{thm:rankSETS}, it is inert. Hence, $\rk(\iota(\eq(S)))\leq\rk(\iota(F(\Sigma)))$. The result then follows as $\iota$ is injective.
\end{proof}

%%%----------------------------------------------%%%
%%%------------Concluding remarks-----------%%%
%%%----------------------------------------------%%%
\section{More on stable domains}
\label{sec:Concluding}
The stable domain of a pair of maps played a central role in this article, and we include here a brief discussion about this object.

\p{Symmetry}
As we saw in Example \ref{ex:symmetry}, $\SD(g, h)\neq\SD(h, g)$ in general, and so the stable domain is not a symmetric construction. We now characterise those injective maps for which $\SD(g, h)=\SD(h, g)$. Recall the maps $\phi_{(g, h)}\in\emo(\SD(g, h))$ and $\phi_{(h, g)}\in\emo(\SD(h, g))$ from Lemma \ref{lem:EqualAsFixed}.

\begin{proposition}
\label{prop:SDsymmetry}
Let $g, h: F(\Sigma)\rightarrow F(\Delta)$ be injective homomorphisms. Then $\SD(g, h)=\SD(h, g)$ if and only if $\phi_{(g, h)}\in\aut(\SD(g, h))$ and $\phi_{(h, g)}\in\aut(\SD(h, g))$.
\end{proposition}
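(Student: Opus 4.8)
The plan is to reduce the proposition to a single observation: writing $D_1 := \SD(g, h)$ and $D_2 := \SD(h, g)$, one has $\phi_{(g, h)} \in \aut(D_1)$ if and only if $g(D_1) = h(D_1)$, and likewise $\phi_{(h, g)} \in \aut(D_2)$ if and only if $g(D_2) = h(D_2)$. To see the first equivalence, recall from Lemma \ref{lem:EqualAsFixed} that $\phi_{(g, h)}$ is the endomorphism of $D_1$ given by $x \mapsto h^{-1}(g(x))$; since $g$ and $h$ are injective this endomorphism is injective, so it is an automorphism precisely when it is surjective, i.e. precisely when $h^{-1}(g(D_1)) = D_1$, i.e. precisely when $g(D_1) = h(D_1)$ (no finiteness hypothesis on $D_1$ is required, only that an injective surjection is a bijection). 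By Lemma \ref{lem:CompareImages} we always have $g(D_1) \leq h(D_1)$, so the condition $g(D_1) = h(D_1)$ is equivalent to the reverse containment $h(D_1) \leq g(D_1)$; symmetrically, $\phi_{(h, g)} \in \aut(D_2)$ is equivalent to $g(D_2) \leq h(D_2)$, using Lemma \ref{lem:CompareImages} with the roles of $g$ and $h$ interchanged.

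For the forward implication, assume $D_1 = D_2 =: D$. Applying Lemma \ref{lem:CompareImages} to $\SD(g, h) = D$ gives $g(D) \leq h(D)$, and applying it to $\SD(h, g) = D$ gives $h(D) \leq g(D)$; hence $g(D) = h(D)$, and by the equivalence above both $\phi_{(g, h)}$ and $\phi_{(h, g)}$ are automorphisms. For the backward implication, assume both maps are automorphisms. By the equivalence, $h(D_1) \leq g(D_1)$, so $D_1$ is a subgroup $K \leq F(\Sigma)$ satisfying $h(K) \leq g(K)$; the maximality clause of Lemma \ref{lem:CompareImages}, applied with the roles of $g$ and $h$ interchanged, gives $D_1 \leq \SD(h, g) = D_2$. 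Symmetrically, $g(D_2) \leq h(D_2)$ forces $D_2 \leq \SD(g, h) = D_1$, so $D_1 = D_2$.

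There is no substantial obstacle here; the only point needing care is the opening equivalence between ``$\phi_{(g, h)}$ is an automorphism'' and the equality of images $g(D_1) = h(D_1)$, which combines injectivity of $g$ and $h$ with the variational characterisation of the stable domain in Lemma \ref{lem:CompareImages}. Once that is in hand, each direction of the proposition is a two-line maximality argument.
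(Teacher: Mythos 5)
Your proof is correct and follows essentially the same route as the paper: both directions rest on Lemma \ref{lem:CompareImages} (the containment $g(\SD(g,h))\leq h(\SD(g,h))$ for one direction and the maximality clause for the other), combined with the observation that the injective endomorphism $\phi_{(g,h)}$ is an automorphism exactly when $g$ and $h$ have equal images on the stable domain. If anything, your write-up is slightly more explicit than the paper's in the forward direction, where obtaining $g(D)=h(D)$ genuinely requires applying Lemma \ref{lem:CompareImages} to both $\SD(g,h)$ and $\SD(h,g)$.
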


\begin{proof}
Suppose $\SD(g, h)=\SD(h, g)$. By Lemma \ref{lem:CompareImages}, $g(\SD(g, h))=h(\SD(g, h))$ and so the monomorphism $\phi_{(g, h)}:x\mapsto h^{-1}g(x)$ is surjective, and so is an automorphism. Symmetrically, $\phi_{(h, g)}\in\aut(\SD(h, g))$ as required.

If $\phi_{(g, h)}\in\aut(\SD(g, h))$ then $h^{-1}(g(\SD(g, h))= \SD(g, h)$. Therefore, $g(\SD(g, h))= h(\SD(g, h))$ and so, by Lemma \ref{lem:CompareImages}, we have that $\SD(g, h)\leq\SD(h, g)$. Symmetrically, if $\phi_{(h, g)}\in\aut(\SD(h, g))$ then $\SD(h, g)\leq\SD(g, h)$. Hence, $\SD(g, h)=\SD(h, g)$ as required.
\end{proof}

\p{Finite generation}
By Theorem \ref{thm:EqRankBoundedSD}, it is important to understand the rank of the stable domain $\SD(g, h)$. Unfortunately, Example \ref{ex:finiteGen} showed that stable domains are not necessarily finitely generated.
The key point used in Example \ref{ex:finiteGen} was the non-injectivity of the map $g$.
This suggests the following question.
\begin{question}
\label{Qn:bases}
Suppose both $g$ and $h$ are injective.
\begin{enumerate}[label=(\alph*)]
\item Is $\SD(g, h)$ finitely generated?
\item If so, is $\rk(\SD(g, h))\leq|\Sigma|$?
\end{enumerate}
\end{question}
By Theorem \ref{thm:EqRankBoundedSD}, if $\rk(\SD(g, h))\leq|\Sigma|$ then $\rk(\eq(g, h))\leq|\Sigma|$, which would resolve the Equaliser Conjecture for injective maps. In fact, this would also resolve Conjecture \ref{Qn:Inert} for injective maps (see \ref{appendix:EquivalentConj}).

\p{Computing bases}
Ventura asked if there exists an algorithm to compute a basis for the stable image of a free group endomorphism \cite[Problem 4.6]{Dagstuhl2019}. As stable domains generalise stable images, the following question generalises Ventura's question.
\begin{question}
\label{Qn:algorithm}
Does there exist an algorithm with input a pair of injective homomorphisms $g, h: F(\Sigma)\rightarrow F(\Delta)$, and with output a finite basis for $\SD(g, h)$?
\end{question}

Recall from the introduction that Stallings asked if there exists an algorithm to compute a basis for $\eq(g, h)$, $g$ and $h$ as in Question \ref{Qn:algorithm}.
Lemma \ref{lem:algorithmicSD} answered Question \ref{Qn:algorithm} for certain maps, and this was the key step in proving a special case of Stallings' algorithmic question (Theorem \ref{thm:RetractAlgorithmic}).
This connection generalises, as a positive answer to Question \ref{Qn:algorithm} yields a positive answer to Stallings' algorithmic question: Firstly compute a basis for $\SD(g, h)$, and use this basis to describe the endomorphism $\phi_{(g, h)}: \SD(g, h)\rightarrow \SD(g, h)$. We can compute a basis for the stable image $\phi_{(g, h)}^{\infty}$ of $\phi_{(g, h)}$ (as stable images are themselves stable domains). As $\phi_{(g, h)}$ acts as an automorphism on $\phi_{(g, h)}^{\infty}$, we can compute a basis for the corresponding fixed subgroup $\fix(\phi_{(g, h)}|_{\phi_{(g, h)}^{\infty}})$ \cite{Bogopolski2016algorithm}. This subgroup is precisely $\fix(\phi_{(g, h)})$ \cite{Imrich1989Endomorphisms}, which, by Lemma \ref{lem:EqualAsFixed}, is $\eq(g, h)$ as required.

The above also allows one to compute a basis for $\fix(\phi)$, $\phi: F(\Sigma)\rightarrow F (\Sigma)$ any endomorphism; the case of $\phi$ injective follows immediately from the above, while if $\phi$ is non-injective then there exists a constructable injective endomorphism $\phi': F(\Sigma)\rightarrow F (\Sigma)$ and an constructable isomorphism $\pi: \fix(\phi)\rightarrow\fix(\phi')$ \cite{Imrich1989Endomorphisms}, and so a basis for $\fix(\phi)$ can be obtained by finding a basis for $\fix(\phi')$ and then reversing the isomorphism.

%%%----------------------------------------------%%%
%%%------------Pre-appendix technical TeX stuff-----------%%%
%%%----------------------------------------------%%%

%alter the section title to make clear its an appendix
\renewcommand{\thesection}{Appendix \Alph{section}}

%restart the section counter
\setcounter{section}{0}

%make the appendix proposition and corollary look nice.
\newtheorem{appendixProposition}{Proposition} 
\renewcommand{\theappendixProposition}{\Alph{section}.\arabic{appendixProposition}}
\newtheorem{appendixCorollary}[appendixProposition]{Corollary}

%%%----------------------------------------------%%%
%%%------------Appendix-----------%%%
%%%----------------------------------------------%%%

\section{The equivalence of Conjectures \ref{Qn:StallingsRank} and \ref{Qn:Inert}}
\label{appendix:EquivalentConj}
As we mentioned in the introduction, Ventura implied that the Equaliser Conjecture can be reformulated in terms of inertness, that is, Conjectures \ref{Qn:StallingsRank} and \ref{Qn:Inert} are equivalent. We prove this equivalence now, starting with the following general proposition, where the Equaliser Conjecture corresponds to part (\ref{EquivalentConj:1}) of the proposition and Conjecture \ref{Qn:Inert} corresponds to part (\ref{EquivalentConj:2}).

We say that a class $\mathcal{C}$ of free group homomorphisms is \emph{closed under restrictions} if for all maps $g:F(\Sigma)\rightarrow F(\Delta)$ in $\mathcal{C}$ and all finitely generated subgroups $K\leq F(\Sigma)$, the restriction map $g|_K: K\rightarrow F(\Delta)$, viewing $K$ as an abstract free group, is also contained in $\mathcal{C}$. For example, the classes of {all} free group homomorphisms and of all injective free group homomorphisms are closed under restrictions.

\begin{appendixProposition}
\label{prop:EquivalentConj}
Let $\mathcal{C}$ be a class of free group homomorphisms which is closed under restrictions.
The following are equivalent.
\begin{enumerate}
\item\label{EquivalentConj:1} For all free groups $F(\Sigma)$ and $F(\Delta)$ and all homomorphisms $g, h: F(\Sigma)\rightarrow F(\Delta)$ in $\mathcal{C}$ with $h$ injective, $\rk(\eq(g, h))\leq|\Sigma|$.
\item\label{EquivalentConj:2} For all free groups $F(\Sigma)$ and $F(\Delta)$ and all sets of homomorphisms $S: F(\Sigma)\rightarrow F(\Delta)$ with $S\subset\mathcal{C}$ and containing at least one injective map, $\eq(S)$ is an inert subgroup of $F(\Sigma)$.
\end{enumerate}
\end{appendixProposition}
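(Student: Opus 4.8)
The plan is to prove the two implications separately, and the direction $(\ref{EquivalentConj:2})\Rightarrow(\ref{EquivalentConj:1})$ is the easy one: given $g, h:F(\Sigma)\to F(\Delta)$ in $\mathcal{C}$ with $h$ injective, apply $(\ref{EquivalentConj:2})$ to the two-element set $S=\{g,h\}$ (which contains an injective map and lies in $\mathcal{C}$) to conclude $\eq(g,h)$ is inert in $F(\Sigma)$; then $\rk(\eq(g,h))=\rk(\eq(g,h)\cap F(\Sigma))\leq\rk(F(\Sigma))=|\Sigma|$, exactly as sketched in the introduction. So the substance is the forward direction $(\ref{EquivalentConj:1})\Rightarrow(\ref{EquivalentConj:2})$.

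For $(\ref{EquivalentConj:1})\Rightarrow(\ref{EquivalentConj:2})$, let $S\subset\mathcal{C}$ contain an injective map $h_0$, and let $K\leq F(\Sigma)$ be an arbitrary finitely generated subgroup; I must show $\rk(\eq(S)\cap K)\leq\rk(K)$. The key trick is to view $K$ as an abstract free group with $|{\rm basis}(K)|=\rk(K)$ and replace each $g\in S$ by its restriction $g|_K:K\to F(\Delta)$, which lies in $\mathcal{C}$ since $\mathcal{C}$ is closed under restrictions. Crucially $h_0|_K$ is still injective, and $\eq(g|_K, h_0|_K)=\eq(g,h_0)\cap K$, so $\eq(S|_K)=\eq(S)\cap K$ where $S|_K:=\{g|_K\mid g\in S\}$. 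Now the problem reduces to showing $\rk(\eq(T))\leq\rk(K)$ for a set $T$ of homomorphisms out of the free group $K$ containing at least one injective map — i.e., it suffices to prove the Equaliser Conjecture (part (\ref{EquivalentConj:1}), in the strong ``for all of $\mathcal{C}$'' form) upgrades from pairs to arbitrary sets. That upgrade is handled by writing $\eq(T)=\cap_{g\in T}\eq(g, h_0|_K)$, applying $(\ref{EquivalentConj:1})$ to each pair $(g, h_0|_K)$ to get $\rk(\eq(g,h_0|_K))\leq\rk(K)$, and then using that a nested intersection of subgroups each of rank at most $\rk(K)$ has rank at most $\rk(K)$ by Proposition \ref{prop:Exercise} (after enumerating $T$, which is countable, and taking $B_n=\cap_{i\le n}\eq(g_i,h_0|_K)$).

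The main obstacle — and the reason this is not completely immediate — is precisely this last point: $(\ref{EquivalentConj:1})$ as stated only bounds the rank of the equaliser of a \emph{pair}, and there is no inertness available a priori to control the rank of an intersection of equalisers. The resolution is the observation that \emph{every} equaliser $\eq(g, h_0|_K)$ in the chain contains $\eq(T)$, so the $B_n$ form a genuinely nested sequence whose terms all have rank $\leq\rk(K)$, putting us exactly in the hypothesis of Takahasi's result (Proposition \ref{prop:Exercise}). One must also take a little care that the bound obtained is $\rk(K)$ and not $\rk(F(\Sigma))$ — this is why the restriction step, rather than working directly in $F(\Sigma)$, is essential: it re-reads the ambient free group as $K$ so that $(\ref{EquivalentConj:1})$ applied to the restricted maps yields the sharp constant $\rk(K)=|{\rm basis}(K)|$. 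Assembling these pieces gives $\rk(\eq(S)\cap K)=\rk(\eq(S|_K))\leq\rk(K)$ for all finitely generated $K$, which is exactly inertness of $\eq(S)$ in $F(\Sigma)$.
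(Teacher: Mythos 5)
Your reduction to $K$ via the identity $\eq(S|_K)=\eq(S)\cap K$ is fine, and the easy direction is handled exactly as in the paper. The gap is in the last step, where you upgrade from pairs to sets. You set $B_n=\cap_{i\le n}\eq(g_i,h_0|_K)$ and assert that ``the $B_n$ form a genuinely nested sequence whose terms all have rank $\leq\rk(K)$'' so that Proposition \ref{prop:Exercise} applies. But statement (\ref{EquivalentConj:1}) only bounds the rank of each \emph{individual} equaliser $\eq(g_i,h_0|_K)$; it says nothing about the partial intersections $B_n$ for $n\geq1$. An intersection of finitely many subgroups each of rank at most $r$ need not have rank at most $r$ (by the Hanna Neumann bound, two rank-$3$ subgroups can intersect in a subgroup of rank $5$), and $B_n$ is not itself the equaliser of a pair in $\mathcal{C}$, so (\ref{EquivalentConj:1}) cannot be applied to it. The observation that every $\eq(g_i,h_0|_K)$ contains $\eq(T)$ gives nestedness but does nothing to control $\rk(B_n)$.

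The repair is to run the restriction trick one level deeper before intersecting: for a fixed pair $(g,h_0)$ and \emph{every} finitely generated $L\leq F(\Sigma)$ one gets $\rk(\eq(g,h_0)\cap L)=\rk(\eq(g|_L,h_0|_L))\leq\rk(L)$, i.e.\ each pairwise equaliser $\eq(g,h_0)$ is \emph{inert} in $F(\Sigma)$, not merely of bounded rank. Then Lemma \ref{lem:InertiaIntersect} applies to the countable family $\{\eq(g,h_0)\}_{g\in S}$; its proof controls the finite intersections precisely via $\rk(A\cap B\cap L)\leq\rk(B\cap L)\leq\rk(L)$, which is the step your rank-only bound cannot supply. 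This two-stage argument --- pairwise inertness first, then intersection of inert subgroups --- is exactly the paper's proof.
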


\begin{proof}
Clearly (\ref{EquivalentConj:2}) implies (\ref{EquivalentConj:1}), as $\eq(g, h)=\eq(g, h)\cap F(\Sigma)$ so by inertness $\rk(\eq(g, h))\leq\rk(F(\Sigma))\leq|\Sigma|$.

For (\ref{EquivalentConj:1}) implies (\ref{EquivalentConj:2}), assume (\ref{EquivalentConj:1}) holds and consider a pair of homomorphisms $g, h: F(\Sigma)\rightarrow F(\Delta)$ with $h$ injective, and let $K$ be an arbitrary subgroup of $F(\Sigma)$. Note that if $K$ is not finitely generated then $\rk(\eq(g|_K, h|_K))\leq\rk(K)$, while if $K$ is finitely generated then the homomorphisms $g|_K, h|_K: K\rightarrow F(\Delta)$ satisfy part \ref{EquivalentConj:1} of the proposition (as $\mathcal{C}$ is closed under restrictions), and so by assumption $\rk(\eq(g|_K, h|_K))\leq\rk(K)$.
Then $\eq(g,h)\cap K=\eq(g|_K, h|_K)$ and so $\rk(\eq(g,h)\cap K)=\rk(\eq(g|_K, h|_K))\leq\rk(K)$. Therefore, as $K$ is arbitrary $\eq(g, h)$ is inert for all such maps $g$ and $h$.
Now consider a set of homomorphisms $S: F(\Sigma)\rightarrow F(\Delta)$ containing at least one injective map, $h$ say. Then $\eq(S)=\cap_{g\in S}\eq(g, h)$, and as each $\eq(g, h)$ is inert, and as $S$ is necessarily countable, the result follows by Lemma \ref{lem:InertiaIntersect}.
\end{proof}

As we noted above, the classes of {all} free group homomorphisms and of all injective free group homomorphisms are closed under restrictions. Therefore, the proposition has the following corollary.

\begin{appendixCorollary}\leavevmode
\begin{enumerate}
\item Conjecture \ref{Qn:StallingsRank} holds if and only if Conjecture \ref{Qn:Inert} holds.
\item Conjecture \ref{Qn:StallingsRank} holds for injective maps if and only if Conjecture \ref{Qn:Inert} holds for injective maps.
\end{enumerate}
\end{appendixCorollary}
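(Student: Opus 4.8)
The plan is to obtain the corollary as a direct specialisation of Proposition \ref{prop:EquivalentConj}, applied to two different classes $\mathcal{C}$. As recorded in the paragraph preceding the corollary, both the class of all free group homomorphisms and the class of all injective free group homomorphisms are closed under restrictions, so Proposition \ref{prop:EquivalentConj} is available for each.

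For part (1), take $\mathcal{C}$ to be the class of all free group homomorphisms. Under the usual reading of Conjecture \ref{Qn:StallingsRank} (with an implicit universal quantifier over the domain $F(\Sigma)$ and codomain $F(\Delta)$), condition (\ref{EquivalentConj:1}) of the proposition is word for word the statement of Conjecture \ref{Qn:StallingsRank}. Since an arbitrary subset of $\mathcal{C}$ is just an arbitrary set of free group homomorphisms, condition (\ref{EquivalentConj:2}) is likewise word for word the statement of Conjecture \ref{Qn:Inert}. Hence the equivalence (\ref{EquivalentConj:1}) $\Leftrightarrow$ (\ref{EquivalentConj:2}) supplied by the proposition is exactly the assertion that Conjecture \ref{Qn:StallingsRank} holds if and only if Conjecture \ref{Qn:Inert} holds.

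For part (2), take $\mathcal{C}$ to be the class of all injective free group homomorphisms. Now a pair $g, h \in \mathcal{C}$ with $h$ injective is simply an arbitrary pair of injective homomorphisms, so condition (\ref{EquivalentConj:1}) becomes Conjecture \ref{Qn:StallingsRank} restricted to injective maps; and a non-empty set $S \subset \mathcal{C}$ automatically consists of injective maps and in particular contains one, so condition (\ref{EquivalentConj:2}) becomes Conjecture \ref{Qn:Inert} restricted to injective maps. Proposition \ref{prop:EquivalentConj} then yields the equivalence in part (2).

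There is no real obstacle here: all of the substance lies in Proposition \ref{prop:EquivalentConj}, whose nontrivial implication (\ref{EquivalentConj:1}) $\Rightarrow$ (\ref{EquivalentConj:2}) is established by the ``restrict to a subgroup'' device --- using the identity $\eq(g,h) \cap K = \eq(g|_K, h|_K)$ together with the closure of $\mathcal{C}$ under restrictions, and then reducing an arbitrary set $S$ to the pairs $\{g, h\}$ with $h$ a fixed injective member via $\eq(S) = \bigcap_{g \in S} \eq(g,h)$ and Lemma \ref{lem:InertiaIntersect}. The only points to watch in the corollary itself are the bookkeeping of these quantifiers when matching Conjectures \ref{Qn:StallingsRank} and \ref{Qn:Inert} against conditions (\ref{EquivalentConj:1}) and (\ref{EquivalentConj:2}), and the harmless observation that when $F(\Sigma)$ is infinitely generated both conditions hold vacuously, so the two instances of $\mathcal{C}$ chosen above genuinely capture the full content of the two conjectures.
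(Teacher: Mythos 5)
Your proposal is correct and is essentially identical to the paper's own (very short) argument: the corollary is obtained by specialising Proposition \ref{prop:EquivalentConj} to the class of all free group homomorphisms for part (1) and to the class of all injective free group homomorphisms for part (2), both of which are closed under restrictions. Your extra remarks on quantifier bookkeeping and the vacuous infinitely generated case are harmless elaborations of the same argument.
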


\bibliographystyle{amsplain}
\bibliography{BibTexBibliography}

\end{document}